\documentclass{article}

\usepackage{graphicx}
\usepackage[T1]{fontenc}
\usepackage[utf8]{inputenc}
\usepackage[dvipsnames]{xcolor}
\usepackage[breaklinks=true]{hyperref}

\newcommand{\eqISO}{\hyperref[eq:ISO]{(ISO(\ensuremath{\bidvector}))}\xspace}
\newcommand{\eqKKT}{\hyperref[eq:kktiso]{(KKT)}\xspace}
\newcommand{\eqD}{\hyperref[eq:ISO-dual-problem-zeta]{(D(\ensuremath{\pricevector}))}\xspace}
\newcommand{\eqPF}{\hyperref[eq:ISO-linear-production-problem]{(\ensuremath{P(\flowvector(\bidvector))})}\xspace}
\newcommand{\eqGNEP}{\hyperref[eq:equilibrium-problem-gnep]{(GNEP)}\xspace}

\usepackage{authblk}
\usepackage{url}
\usepackage{amsmath}
\usepackage{amssymb}
\usepackage{amsthm}
\usepackage{empheq}
\usepackage{relsize}
\usepackage{physics}
\usepackage{doi}
\usepackage{xspace}
\usepackage{pgfplots}
\usepackage{multirow}
\usepackage{booktabs}
\usepackage{makecell}
\usepackage{diagbox}
\pgfplotsset{compat=1.18}
\usepgfplotslibrary{groupplots}
\usepackage{pgfplotstable}
\usepackage{tikz}
\usetikzlibrary{positioning}
\usepackage{biblatex}
\makeatletter
\newcommand{\leqnomode}{\tagsleft@true\let\veqno\@@leqno}
\makeatother

\usepackage{todonotes}

\newtheoremstyle{bfnote}%
{}{}%
{\itshape}{}%
{\bfseries}{.}%
{ }%
{\thmname{#1}\thmnumber{ #2}\thmnote{ (#3)}}

\theoremstyle{bfnote}
\newtheorem{theorem}{Theorem}[section]
\newtheorem{corollary}{Corollary}[theorem]

\newtheorem{proposition}[theorem]{Proposition}

\theoremstyle{remark}
\newtheorem{remark}{Remark}
\newtheorem{example}[theorem]{Example}

\theoremstyle{plain}

\newcommand{\A}{\mathcal{A}}

\newcommand{\R}{\mathbb{R}}

\DeclareMathOperator{\sgn}{sgn}

\DeclareMathOperator*{\argmin}{argmin}

\renewcommand{\epsilon}{\varepsilon}

\newcommand{\nodeset}{\mathcal{N}}
\newcommand{\nodenumber}{N}
\newcommand{\node}{n}

\newcommand{\edgeset}{\mathcal{E}}
\newcommand{\edgenumber}{E}
\newcommand{\edge}{e}

\newcommand{\techset}{\mathcal{T}}
\newcommand{\technumber}{T}
\newcommand{\tech}{\tau}

\newcommand{\bidspace}{\A}

\newcommand{\productionletter}{q}
\newcommand{\nodalproductionletter}{Q}
\newcommand{\bidletter}{\alpha}
\newcommand{\flowletter}{f}
\newcommand{\resistanceletter}{r}
\newcommand{\priceletter}{\lambda}
\newcommand{\taxletter}{\zeta}
\newcommand{\kktcapacityletter}{\mu}
\newcommand{\kktflowletter}{\beta}
\newcommand{\isocostletter}{c}
\newcommand{\pollutionfactorletter}{z}
\newcommand{\pollutioncostletter}{\Lambda}
\newcommand{\demandletter}{d}
\newcommand{\pollutionlimitletter}{L}
\newcommand{\neighboursetletter}{K}

\newcommand{\constraintfunctionletter}{g}
\newcommand{\isoletter}{y}
\newcommand{\agentcostletter}{f}

\newcommand{\deviationletter}{x}

\newcommand{\pollutionlimit}{\overline{\pollutionlimitletter}}
\NewDocumentCommand{\demand}{O{\node}}{\demandletter_{#1}}
\NewDocumentCommand{\pollutionfactor}{O{\node} O{\tech}}{\pollutionfactorletter_{#1, #2}}
\NewDocumentCommand{\OTpollutionfactor}{O{\node}}{\pollutionfactorletter_{#1}}
\NewDocumentCommand{\pollutioncost}{O{\tech}}{\pollutioncostletter_{#1}}

\NewDocumentCommand{\resistance}{O{\edge}}{\resistanceletter_{#1}}
\NewDocumentCommand{\maximumflow}{O{\edge}}{\overline{\flowletter}_{#1}}
\NewDocumentCommand{\minimumflow}{O{\edge}}{\underline{\flowletter}_{#1}}
\NewDocumentCommand{\maximumcapacity}{O{\node} O{\tech}}{\overline{\productionletter}_{#1, #2}}
\NewDocumentCommand{\OTmaximumcapacity}{O{\node}}{\overline{\productionletter}_{#1}}
\NewDocumentCommand{\neighbourset}{O{\node}}{\neighboursetletter_{#1}}
\NewDocumentCommand{\isocost}{O{\node} O{\tech}}{\isocostletter_{#1, #2}}
\NewDocumentCommand{\OTisocost}{O{\node}}{\isocostletter_{#1}}
\NewDocumentCommand{\balanceconstraint}{O{\node}}{\constraintfunctionletter^{\text{B}}_{#1}}

\NewDocumentCommand{\capacityconstraint}{O{\node} O{\tech}}{\constraintfunctionletter^{\text{C}}_{#1, #2}}
\NewDocumentCommand{\capacityupperconstraint}{O{\node} O{\tech}}{\constraintfunctionletter^{\text{CU}}_{#1, #2}}
\NewDocumentCommand{\capacitylowerconstraint}{O{\node} O{\tech}}{\constraintfunctionletter^{\text{CL}}_{#1, #2}}

\newcommand{\activeupperset}{\text{NT}^{\text{CU}}}
\newcommand{\activelowerset}{\text{NT}^{\text{CL}}}

\newcommand{\pollutionfactorvector}{\boldsymbol{\pollutionfactorletter}}

\newcommand{\maximumcapacityvector}{\overline{\productionvector}}

\NewDocumentCommand{\production}{O{\node} O{\tech}}{\productionletter_{#1, #2}}
\NewDocumentCommand{\deviation}{O{\node} O{\tech}}{\deviationletter_{#1, #2}}
\NewDocumentCommand{\OTproduction}{O{\node}}{\productionletter_{#1}}
\NewDocumentCommand{\flow}{O{\edge}}{\flowletter_{#1}}
\NewDocumentCommand{\nodalproduction}{O{\node}}{\nodalproductionletter_{#1}}

\newcommand{\productionvector}{\boldsymbol{\productionletter}}
\newcommand{\flowvector}{\boldsymbol{\flowletter}}
\newcommand{\isovector}{\boldsymbol{\isoletter}}
\newcommand{\deviationvector}{\boldsymbol{\deviationletter}}

\NewDocumentCommand{\truecost}{O{\node} O{\tech}}{\hat{\bidletter}_{#1, #2}}
\NewDocumentCommand{\OTtruecost}{O{\node}}{\hat{\bidletter}_{#1}}
\NewDocumentCommand{\maximumbid}{O{\node} O{\tech}}{\overline{\bidletter}}
\NewDocumentCommand{\OTmaximumbid}{O{\node}}{\overline{\bideletter}_{#1}}

\newcommand{\truecostvector}{\hat{\boldsymbol{\bidletter}}}

\NewDocumentCommand{\agentcost}{O{\node}}{\agentcostletter_{#1}}

\NewDocumentCommand{\bid}{O{\node} O{\tech}}{\bidletter_{#1, #2}}
\NewDocumentCommand{\OTbid}{O{\node}}{\bidletter_{#1}}
\newcommand{\bidvector}{\boldsymbol{\bidletter}}
\NewDocumentCommand{\nodalbidvector}{O{\node}}{(\bid[#1][\tech])_{\tech}}

\newcommand{\tax}{\taxletter}
\NewDocumentCommand{\price}{O{\node}}{\priceletter_{#1}}
\NewDocumentCommand{\kktcapacityupper}{O{\node} O{\tech}}{\overline{\kktcapacityletter}_{#1, #2}}
\NewDocumentCommand{\OTkktcapacityupper}{O{\node}}{\overline{\kktcapacityletter}_{#1}}
\NewDocumentCommand{\kktcapacitylower}{O{\node} O{\tech}}{\kktcapacityletter_{#1, #2}}
\NewDocumentCommand{\OTkktcapacitylower}{O{\node}}{\kktcapacityletter_{#1}}
\NewDocumentCommand{\kktflowupper}{O{\edge}}{\overline{\kktflowletter}_{#1}}
\NewDocumentCommand{\kktflowlower}{O{\edge}}{\kktflowletter_{#1}}

\newcommand{\pricevector}{\boldsymbol{\priceletter}}
\newcommand{\kktcapacityuppervector}{\overline{\boldsymbol{\kktcapacityletter}}}
\newcommand{\kktcapacitylowervector}{\boldsymbol{\kktcapacityletter}}

\begin{document}

\title{Strategic Pricing in Electricity Markets with Pollution Constraints}

\author[1]{Luce Brotcorne}
\author[1]{Gaël Guillot}
\author[2]{Alejandro Jofré}
\author[2]{Benjamín Vera}
\affil[1]{Univ. Lille, Inria, CNRS, Centrale Lille}
\affil[2]{Depto de Ingeniería Matemática and CMM, Universidad de Chile}
\date{Submitted for publication on April 27th, 2026}
\setcounter{Maxaffil}{0}
\renewcommand\Affilfont{\itshape\small}

\maketitle

\begin{abstract}
	We introduce a new model for a regulated day-ahead type auction-based electrical market in which the system operator can measure and limit the producer's emissions when choosing its optimal dispatch. We prove properties of this market model that describe it as a generalization to other previous works in the electrical market literature. Furthermore, we use standard sensitivity analysis tools to measure the impact of these pollution variables on market equilibria and present numerical examples of this effect.
\end{abstract}


\section{Introduction} \label{sect:introduction}

Electricity generation is one of the largest contributors to global greenhouse gas emissions, accounting for nearly 30\% of global emissions as of 2021\footnote{World Resources Institute, Mengpin Ge, Johannes Friedrich, and Leandro Vigna, Where Do Emissions Come From? 4 Charts Explain Greenhouse Gas Emissions by Sector, December 5, 2024. Available at \href{https://www.wri.org/insights/4-charts-explain-greenhouse-gas-emissions-countries-and-sectors}{world resources institute}
. Accessed July 14, 2025.}. Thus, decarbonizing the energy sector is essential for meeting international climate targets. In recent years, global electricity production has increasingly shifted towards renewable sources \cite{irena2025capacity}, a trend that poses technical and economic challenges for grid operation.

Since the widespread liberalization of electricity markets in the 1980's \cite{kirschen2018fundamentals}, energy dispatch decisions are no longer centrally planned but rather emerge from market mechanisms that match supply and demand. In these markets, firms behave strategically, and regulatory interventions, such as pollution taxes or emissions caps, interact in complex ways with market dynamics. Furthermore, physical features of the transmission network (such as resistance-induced energy losses) introduce locational disparities and market imperfections that cannot be captured by simplified models. In some real-world systems, such as the Chilean electrical grid, transmission losses can account for up to 30\% of the total energy transmitted \cite{AtrapandoElSol}, and their presence directly enables expressions of market power, as demonstrated in \cite{escobar2010monopolistic}. Understanding how environmental policy instruments such as emissions limits or carbon pricing may influence the existing market structure is therefore a pressing concern for both regulators and researchers.

Electricity market modeling is a highly active research area. Several studies incorporate transmission losses in oligopolistic settings \cite{bjorndal2005deregulated,aussel2013electricity,DidierAusselDeregulated,escobarjofreStanford}. Others adopt machine learning techniques over a simplified network model, such as \cite{Francesco2024Learning}, who use an actor-critic deep reinforcement learning framework. Integer programming approaches to strategic bidding have been documented in \cite{kostarelou2021,kozanidis2023exact}. Environmental considerations have also entered this modeling space. For example, \cite{hernandez2022pollutionregulation} studies pollution regulation in a continuous-time setting. However, to the best of our knowledge, no existing model fully integrates multiple generation technologies allowing for strategic bidding, resistance-induced network losses enabling imperfect competition, and nodal pricing within a bilevel equilibrium framework. This paper aims to fill-in that gap by providing a tractable but flexible model that captures all these features.

We consider a nodal electricity market with a standard two-layer structure: producers submit bids to a central dispatching agent (the Independent System Operator (ISO)) who solves an optimization problem to assign production and manage power flows. This setup is modeled as a multi-leader-single-follower game, in which strategic producers (leaders) interact through a shared ISO (follower), leading to a Nash equilibrium with bilevel structure.

Our framework supports both pay-as-bid and pay-as-clear schemes, but we adopt the latter, in line with European practice (see \cite{ACER2022}). Under pay-as-clear, each producer is compensated at a nodal price that reflects the marginal cost of delivering energy at its location. Nodal pricing, first introduced in \cite{scweppe1988spotpricing}, has become a key component of electricity market design and is widely used in practice\footnote{See EPEX SPOT, \textbf{Basics of the Power Market}, available at
\url{https://www.epexspot.com/en/basicspowermarket} (accessed June 21, 2024).}. In simplified models with no transmission losses, nodal prices reduce to the highest accepted bid (see \cite{Francesco2024Learning}). However, in realistic networks with resistive losses and environmental penalties, these prices diverge and encode richer information about the physical and economic cost of delivery.

This work provides theoretical insights as well as numerical tools for analyzing pollution-constrained market equilibria. Our main contributions are outlined as follows. First, we establish that, under mild assumptions, the ISO's dispatch problem admits a unique solution (both primal and dual) for any given bid profile by the generating agents. This property facilitates strategic bidding problems as it is no longer necessary to distinguish optimistic and pessimistic formulations. Next, we harness this uniqueness property to reformulate the multi-leader-follower game as a mixed complementarity problem (MCP) using first-order optimality conditions. Finally, we implement a grid-search procedure to obtain (and analyze the variation of) equilibrium outcomes under different pollution control policies. Our model supports sensitivity analysis over policy parameters, enabling us to identify price shocks, shifts in production patterns, and emergent expressions of market power. In particular, we compare the effects of emissions caps and pollution penalties on dispatch and cost outcomes.

The rest of this paper is organized as follows. In Section \ref{sect:market-model}, we introduce the market model, notation and assumptions that are used throughout this work. Section \ref{sect:uniqueness} presents the uniqueness theorems that constitute the theoretical centerpiece of this work, this theorems are followed by results giving a qualitative description of the ISO's electrical dispatch solution. In Section \ref{sect:solution-approach}, the computational approach for finding market equilibria is outlined. This is followed by numerical results on selected network instances in Section \ref{sect:examples}, which also presents discussion on algorithmic performance. Concluding remarks and possible future directions ar discussed in Section \ref{sect:conclusion}.

\section{Electrical market model}\label{sect:market-model}

We study an electricity market structured over a directed graph \((\nodeset, \edgeset)\), where nodes represent generating agents and edges represent a total of \(\edgenumber\) transmission lines. Each agent \(\node \in \nodeset\) controls a set of \(\technumber\) generation technologies \(\tech \in \techset\) and submits bids \(\bidvector_\node = (\bid)_{\tech \in \techset}\) in a sealed-bid auction process. The feasible bidding set for agent \(\node\) is thus defined defined as a multi-dimensional box \(\bidspace_\node = \prod_{\tech \in \techset} [\truecost, \maximumbid]\) where \(\truecost\) is the agent's (private) true cost of energy production via technology \(\tech\), and \(\maximumbid\) is a public upper bound defined by market rules.

The Independent System Operator (ISO) acts as a centralized dispatcher, which responds to the collective bid \(\bidvector = (\bidvector_\node)_{\node \in \nodeset}\) by making generation and transmission decisions in order to minimize operation cost while satisfying network balance, pollution and capacity constraints. That is, by solving the following convex optimization problem: 
\begin{subequations} \label{eq:ISO}
	\begin{align}
		\text{ISO}(\bidvector): \;
    & \min_{\productionvector,\flowvector}
    && \sum_{\node \in \nodeset, \tech \in \techset} (\bid + \pollutioncost \pollutionfactor)\production
    \label{ISO:objective} \\
    &\;\, \text{s.t.} \quad
    && \sum_{\edge \in \neighbourset} \frac{\resistance}{2}\flow^2 + \demand
    \leq
    \sum_{\tech \in \techset} \production \!\!
    + \!\! \sum_{\edge \in \neighbourset} \flow \sgn(\node, \edge),
    && \forall \node,
    \label{ISO:balance} \\
    &&&  \sum_{\node \in \nodeset, \tech \in \techset} \pollutionfactor \production
    \leq \pollutionlimit,
    && 
    \label{ISO:pollution} \\
    &&& 0 \leq \production \leq \maximumcapacity,
    && \forall \node, \tech.
    \label{ISO:capacity}
	\end{align}
\end{subequations}
Here, \(\production\) denotes production from node \(\node\) and technology \(\tech\), and \(\flow\) denotes power flow on edge \(\edge\). Each technology emits pollution at rate \(\pollutionfactor\), and the ISO penalizes pollution coming from technology \(\tech\) through a cost coefficient \(\pollutioncost\), leading to objective function \eqref{ISO:objective} which may be written as \(\sum_{\node, \tech} \isocost \production\) if we define the net cost by \(\isocost = \bid + \pollutioncost \pollutionfactor\).

Constraint \eqref{ISO:balance} is the balance constraint: each edge \(\edge = (\node_{\text{out}}, \node_{\text{in}})\) has resistance \(\resistance\), inducing a quadratic loss of \(\resistance \flow^2\), equally split between connected nodes. Defining the sign function \(\sgn(\node, \edge)\) by \(\sgn(\node_{\text{in}}, \edge) = 1\), \(\sgn(\node_{\text{out}}, \edge) = -1\) and letting \(\neighbourset\) be the set of edges \(\edge \in \edgeset\) which are neighbouring the node \(\node\), we get equation \eqref{ISO:balance} by imposing that the loss and local demand do not exceed the local production and net inbound flow. These constraints have associated dual variables denoted by \(\price \geq 0\), interpreted as \textit{nodal prices} and measured in units of \([\$/E]\). Economically, \(\price\) represents the marginal value of injecting one additional unit of energy at the corresponding node.

Constraint \eqref{ISO:pollution} is the pollution constraint. It means that total emissions must remain below the limit \(\pollutionlimit\). The associated multiplier \(\tax \geq 0\), measured in \([\$/\text{CO}^2]\), can be interpreted as the marginal utility to the ISO of relaxing the pollution limit by one unit, or equivalently, the marginal cost of reducing emissions by one unit. Finally, constraint \eqref{ISO:capacity} is the capacity constraint, imposing an upper bound \(\maximumcapacity\) for every node \(\node\) and technology \(\tech\). They are associated with dual variables \(\kktcapacitylower, \kktcapacityupper \geq 0\), also measured in $[\$/E]$.

Given a collective bid \((\bidvector_1, \dots, \bidvector_\nodenumber)\) and a corresponding ISO solution \((\productionvector, \flowvector)\) with associated multipliers \((\pricevector, \tax, \kktcapacitylowervector, \kktcapacityuppervector)\), we define the utility perceived by agent \(\node\) as
\begin{equation} \label{eq:agent-utility-function}
	\agentcost (\productionvector, \flowvector, \pricevector, \tax, \kktcapacitylowervector, \kktcapacityuppervector) = \sum_{\tech \in \techset} \qty[(\price - \pollutioncost \pollutionfactor) - \pollutionfactor \tax - \truecost]\production.
\end{equation}
Here, \(\pollutionfactor \tax\) accounts for the tax incurred when the pollution constraint \eqref{ISO:pollution} is binding. Also, the term \(\price - \pollutioncost \pollutionfactor\) represents the agent's effective revenue per unit of energy: The nodal price \(\price\) received from the ISO, adjusted to remove the component reflecting environmental cost in \eqISO. Note that the dependence of the ISO's primal response \((\productionvector, \flowvector)\), as well as its dual response \((\pricevector, \tax, \kktcapacitylowervector, \kktcapacityuppervector)\), on the bids \((\bidvector_1, \dots, \bidvector_\nodenumber)\), is not explicitly given and may not be uniquely defined. However, our uniqueness results in Theorems \ref{thm:ISO-primal-uniqueness} and \ref{thm:ISO-dual-uniqueness}, along with a suitable tie-breaking rule (see Remark \ref{rmk:tie-breaking}), will enable us to make these responses unique, thus making the utility expressions given in equation \eqref{eq:agent-utility-function} implicit functions of the bid profile \(\bidvector\). This defines a normal form strategic bidding game \((\bidspace_{\node}, \agentcost)_{\node\in \nodeset}\) whose Nash equilibria are the main object of study in this work.

Some assumptions on the problem data are in order. First, we will assume problem \eqISO satisfies Slater's condition: There exists a pair \((\productionvector, \flowvector)\) such that all inequality constraints \eqref{ISO:balance}, \eqref{ISO:pollution}, and \eqref{ISO:capacity} are satisfied with strict inequality. This mild assumption is typically met in applied contexts and ensures that the KKT conditions for the ISO problem provide both necessary and sufficient conditions for optimality, which will be essential in the developments to follow.

We also assume that all the marginal production costs \(\truecost\) as well as the resistances \(\resistance\) are strictly positive, and that there exists at least one node \(\node\) with nontrivial demand \(\demand > 0\). Another reasonable assumption is that the pollution factors belonging to every node \((\pollutionfactor)_\tech\) are pairwise distinct. This allows us to differentiate among technologies and is realistic, as identical emissions profiles are uncommon in practice.

Finally, we introduce a competitiveness condition: For every \(\node\), we impose
\[\demand - \sum_{\edge \in \neighbourset} \frac{1}{2 \resistance} < 0.\]
This expresses a form of local substitutability: the rest of the network can fully supply node \(\node\) if needed. It plays a role in limiting local market power and prevents situations where an agent could profitably increase bids due to inelastic ISO demand.

Table \ref{tab:notation} provides a summary of the notation introduced in this section. We also now list, for future reference, the KKT conditions for problem \eqISO.
\begin{subequations}
	\begin{empheq}[left=\text{KKT} \!:\! \empheqlbrace]{alignat=3}
		\bid + \pollutioncost \pollutionfactor - \price + \tax \pollutionfactor
		+ \kktcapacityupper - \kktcapacitylower &= 0,
							&& \forall \node, \tech \label{KKT-nodes} \\[0.4em]
							\sum_{\node : \edge \in \neighbourset}
		\price(\resistance \flow - \sgn(n, e)) &= 0,
						       && \forall \edge \label{KKT-edges} \\[0.4em]
						       \demand + \!\!\!
						       \sum_{\edge \in \neighbourset} \!\!\!
						       \qty(\tfrac{\resistance}{2}\flow^2 - \flow \sgn(n, e))
						       - \sum_\tech \production
		\leq 0 &\perp \price \geq 0,
		       && \forall \node \label{CS-balance} \\[0.4em]
		       \sum_{\node, \tech} \pollutionfactor \production - \pollutionlimit
		\leq 0 \;&\perp\; \tax \geq 0,
			 && \label{CS-pollution} \\[0.4em]
		\production \geq 0 \;&\perp\; \kktcapacitylower \geq 0,
				     && \forall \node, \tech \label{CS-capacity-lower} \\[0.4em]
		\production \leq \maximumcapacity \;&\perp\; \kktcapacityupper \geq 0,\;
						    && \forall \node, \tech \label{CS-capacity-upper}
	\end{empheq}
	\label{eq:kktiso}
\end{subequations}

\begin{table}[ht]
	\centering
	\begin{tabular}{lll}
		\toprule
		\textbf{Category} & \textbf{Symbol} & \textbf{Description} \\
		\midrule
		\multirow{3}{*}{Sets and totals}
				  & $\nodeset, \nodenumber$ & Set of nodes, indexed by $\node \in \nodeset$. \(\nodenumber := |\nodeset|\) \\
				  & $\edgeset, \edgenumber$ & Set of edges, indexed by $\edge \in \edgeset$. \(\edgenumber := |\edgeset|\) \\
				  & $\techset, \technumber$  & Set of technologies, indexed by $\tech \in \techset$. \(\technumber := |\techset|\) \\
				  \midrule
				  \multirow{6}{*}{Network data}
				  & $\truecost$ & Unit production cost of technology $\tech$ at node $\node$ \\
				  & $\maximumcapacity$ & Production capacity of technology $\tech$ at node $\node$ \\
				  & $\demand$ & Energy demand at node $\node$ \\
				  & $\resistance$ & Resistance of transmission line $\edge$ \\
				  & $\maximumbid$ & Maximum allowable bid \\
				  & $\sgn(\node,\edge)$ & Orientation of edge $\edge$ w.r.t.\ node $\node$ \\
				  \midrule
				  \multirow{3}{*}{Pollution data}
				  & $\pollutionfactor$ & Unit pollution emitted by technology $\tech$ at node $\node$ \\
				  & $\pollutioncost$ & Social cost per unit of pollution from technology $\tech$ \\
				  & $\pollutionlimit$ & Global pollution limit \\
				  \midrule
				  Agent variables
				  & $\bid$ & Bid cost of technology $\tech$ submitted by agent at node $\node$ \\
				  \midrule
				  \multirow{2}{*}{ISO variables}
				  & $\production$ & Dispatched production of technology $\tech$ at node $\node$ \\
				  & $\flow$ & Dispatched power flow on line $\edge$ \\
				  \midrule
				  \multirow{3}{*}{ISO multipliers}
				  & \(\price\) & Associated to \eqref{ISO:balance}, nodal price of energy at \(\node\) \\
				  & \(\tax\) & Associated to \eqref{ISO:pollution}, global tax on emissions \\
				  & \(\kktcapacitylower, \kktcapacityupper\) & Associated to \eqref{ISO:capacity}, lower and upper bounds \\
				  \bottomrule
	\end{tabular}
	\caption{Summary of notation for the electricity market model.}
	\label{tab:notation}
\end{table}

\section{Uniqueness of ISO response} \label{sect:uniqueness}

We now present a series of propositions leading to Theorem \ref{thm:ISO-primal-uniqueness}, which establishes conditions on the bid profile \(\bidvector\) under which the resulting solution \((\productionvector, \flowvector)\) is unique. The theorem is followed by a uniqueness of multipliers result in Theorem \ref{thm:ISO-dual-uniqueness} and together, these primal-dual uniqueness results are key in order to establish the relevance of our electrical market formulation.

From Lemma 2.2 of \cite{DidierAusselDeregulated}, for any \((\productionvector, \flowvector)\) solution to \eqISO, the following natural bound holds on the flows:
\begin{equation} \label{eq:natural-flow-bound}
	|\flow| \leq \frac{1}{\resistance}.
\end{equation}
Proposition \ref{prop:price-positivity} means that if inequalities \eqref{eq:natural-flow-bound} are strict, then the nodal prices are positive across the network.
\begin{proposition}[Price positivity] \label{prop:price-positivity}
	Let \((\productionvector, \flowvector, \pricevector, \tax, \kktcapacitylowervector, \kktcapacityuppervector)\) be a solution to \eqKKT such that for all \(\edge \in \edgeset\), \(|\resistance \flow| < 1\), then \(\forall \node \in \nodeset: \price > 0\).
\end{proposition}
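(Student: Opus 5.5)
The plan is to use the edge stationarity conditions \eqref{KKT-edges} to show that prices propagate along edges, and then use connectivity plus at least one positive price.

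\textbf{Step 1 (one edge).} Fix an edge \(\edge = (\node_{\text{out}}, \node_{\text{in}})\). Condition \eqref{KKT-edges} involves only its two endpoints:
\[
\price[\node_{\text{in}}](\resistance \flow - 1) + \price[\node_{\text{out}}](\resistance \flow + 1) = 0 .
\]
By hypothesis \(|\resistance \flow| < 1\), so \(1 - \resistance \flow > 0\) and \(1 + \resistance \flow > 0\). Therefore
\[
\price[\node_{\text{in}}] = \frac{1 + \resistance \flow}{1 - \resistance \flow}\,\price[\node_{\text{out}}],
\]
with a strictly positive ratio. Since \(\price \geq 0\), either both endpoint prices vanish or both are positive.

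\textbf{Step 2 (propagation).} I will assume the graph \((\nodeset,\edgeset)\) is connected, which the model implicitly requires as a single network. Under this assumption, Step 1 gives that either \(\pricevector = 0\) or \(\price > 0\) for every \(\node\). If connectivity were not guaranteed, the same argument would run on each connected component, and Step 3 would have to be done per component.

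\textbf{Step 3 (exclude \(\pricevector = 0\)).} This is the main step. Suppose \(\pricevector = 0\). Condition \eqref{KKT-nodes} becomes
\[
\kktcapacitylower = \bid + \pollutioncost \pollutionfactor + \tax \pollutionfactor + \kktcapacityupper .
\]
Bids satisfy \(\bid \geq \truecost > 0\), and I will use the model's standing assumption that \(\pollutioncost, \pollutionfactor \geq 0\) (nonnegative pollution costs and factors). Together with \(\tax, \kktcapacityupper \geq 0\), this gives \(\kktcapacitylower > 0\) for all \(\node,\tech\). By \eqref{CS-capacity-lower}, every \(\production = 0\).

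Now take a node \(\node\) with \(\demand > 0\). Balance \eqref{CS-balance} requires
\[
\demand + \sum_{\edge \in \neighbourset}\Big(\tfrac{\resistance}{2}\flow^2 - \flow\sgn(\node,\edge)\Big) \leq 0,
\]
so energy must be imported. Summing \eqref{CS-balance} over all nodes, each flow term \(\flow\sgn(\node,\edge)\) appears once with each sign and cancels. The losses \(\tfrac{\resistance}{2}\flow^2\) appear twice, once per endpoint. This yields
\[
\sum_\node \demand + \sum_\edge \resistance \flow^2 \leq \sum_{\node,\tech}\production = 0 .
\]
The left side is at least \(\sum_\node \demand > 0\), a contradiction. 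Hence \(\pricevector \neq 0\), and by Step 2, \(\price > 0\) for all \(\node\).

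The potential obstacles are the two modelling facts used above: connectivity of the graph and nonnegativity of \(\pollutioncost\pollutionfactor\). If connectivity fails, Step 3 only rules out zero prices on components containing a node with positive demand, so the statement would require connectivity.
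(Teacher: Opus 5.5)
Your proof is correct and is essentially the paper's argument: positivity propagates across each edge through the factor $(1+r_e f_e)/(1-r_e f_e)>0$ from \eqref{KKT-edges}, connectedness spreads it to all nodes, and one positive price is anchored via \eqref{KKT-nodes}, lower-bound complementarity and $\alpha_{n,\tau}\geq\hat\alpha_{n,\tau}>0$, using the same implicit nonnegativity of $\Lambda_\tau z_{n,\tau}$ and $\zeta z_{n,\tau}$ that the paper relies on. The only difference is that you argue by contradiction (all-zero prices force zero production, which the summed balance constraints forbid), and this usefully makes explicit the paper's one-line claim that positive demand forces some node to produce.
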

\begin{proof}
	Strictly positive demand results in at least one node needing to produce energy. Thus, there exists \(\node_0, \tech_0 \in \nodeset \times \techset\) with \(\production[\node_0][\tech_0] > 0\) and therefore \(\kktcapacitylower[\node_0][\tech_0] = 0\). Evaluating the equation \eqref{KKT-nodes} for this pair we have that
	\[\price[\node_0] = \bid[\node_0][\tech_0] + \pollutioncost[\tech_0] \pollutionfactor[\node_0][\tech_0] + \tax \pollutionfactor[\node_0][\tech_0] + \kktcapacityupper[\node_0][\tech_0] \geq \bid[\node_0][ \tech_0] \geq \truecost[\node_0][\tech_0] > 0\]
	so \(\price[\node_0] > 0\). Additionally, let \(\edge = (\node_{\text{out}}, \node_{\text{in}}) \in \edgeset\) be an arbitrary edge. Evaluating the equation \eqref{KKT-edges} for \(\edge\) and rearranging, we get
	\[\price[\text{out}] = \frac{1 - \resistance \flow}{\resistance \flow + 1} \price[\text{in}].\]
	By the hypothesis that \(|\resistance \flow| < 1\), we have that if one of the prices is strictly positive, then the other one is too. By connectedness of the network, this concludes the result.
	\qed
\end{proof}

From this result as well as the complementary slackness conditions \eqref{CS-balance}, Kirchhoff's law can be easily obtained.

\begin{corollary}[Kirchhoff's law] \label{Kirchhoff-law}
	For any $(\productionvector, \flowvector)$, solution to \eqISO with \(|\resistance \flow| < 1\) for all \(\edge \in \edgeset\), one has that for any node $\node$:
	\begin{equation} \label{Kirchhoff-law-equation}
		\sum_{\tech \in \techset} \production = \demand + \sum_{\edge \in \neighbourset} \qty( \frac{\resistance}{2}\flow^2 - \flow \sgn(\node, \edge) ).
	\end{equation}
	That is, at optimality, the balance constraints are always active.
\end{corollary}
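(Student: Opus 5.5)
The argument passes from the optimization problem \eqISO to its optimality system \eqKKT, applies Proposition \ref{prop:price-positivity}, and then uses complementary slackness \eqref{CS-balance}. It needs three steps.

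\textbf{Step 1: obtain multipliers.} Fix a solution \((\productionvector, \flowvector)\) of \eqISO with \(|\resistance \flow| < 1\) for every edge. Problem \eqISO is convex: the objective is linear, and each balance constraint \eqref{ISO:balance} is of the form convex quadratic minus linear \(\leq 0\), because the losses \(\frac{\resistance}{2}\flow^2\) are convex and \(\resistance > 0\). By the standing Slater assumption, the KKT conditions are necessary for optimality. Hence there exist multipliers \((\pricevector, \tax, \kktcapacitylowervector, \kktcapacityuppervector)\) such that the full tuple \((\productionvector, \flowvector, \pricevector, \tax, \kktcapacitylowervector, \kktcapacityuppervector)\) solves \eqKKT.

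\textbf{Step 2: prices are positive.} The flow hypothesis \(|\resistance \flow|<1\) is a property of \(\flowvector\) alone, so it holds for this KKT tuple. Proposition \ref{prop:price-positivity} therefore gives \(\price > 0\) for every \(\node \in \nodeset\).

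\textbf{Step 3: balance constraints are tight.} The complementarity relation \eqref{CS-balance} states that
\[\price \Bigl(\demand + \sum_{\edge \in \neighbourset} \bigl(\tfrac{\resistance}{2}\flow^2 - \flow \sgn(\node,\edge)\bigr) - \sum_\tech \production\Bigr) = 0.\]
Since \(\price > 0\), the bracket vanishes, and rearranging gives exactly \eqref{Kirchhoff-law-equation} at every node.

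The only point needing care is Step 1: existence of multipliers must come from Slater's condition rather than be assumed. Slater's condition is a standing hypothesis of the paper, so this step is essentially immediate, and no real obstacle is expected.
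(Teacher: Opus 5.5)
Your proof is correct and follows the paper's route: the paper obtains the corollary directly from Proposition~\ref{prop:price-positivity} combined with the complementary slackness condition \eqref{CS-balance}, exactly as in your Steps 2 and 3. Your Step 1, which gets KKT multipliers from the standing Slater assumption, only makes explicit a step the paper leaves implicit.
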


\begin{remark}
	This result can be obtained under more general conditions such as a network with flow bounds \(\flow \in [- \maximumflow, \maximumflow]\) or without the requirement that \(|\resistance \flow| < 1\). This can be done by modifying the argument given in lemma 3.1 in \cite{aussel2013electricity}.
\end{remark}

Having Kirchhoff's law established, uniqueness of flows can be easily obtained by leveraging strict convexity.

\begin{proposition}[Flow uniqueness] \label{prop:flow-uniqueness}
	Let \((\productionvector_1, \flowvector_1)\), \((\productionvector_2, \flowvector_2)\) be solutions to \eqISO. If \(\forall \edge \in \edgeset : \resistance > 0\), then \(\flowvector_1 = \flowvector_2\).
\end{proposition}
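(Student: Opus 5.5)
We plan to argue by convexity of the feasible set combined with strict convexity of the loss terms. Let \((\productionvector_1, \flowvector_1)\) and \((\productionvector_2, \flowvector_2)\) be two solutions of \eqISO, with common optimal value \(v^*\), and suppose for contradiction that \(\flowvector_1 \neq \flowvector_2\). Set \((\productionvector_m, \flowvector_m) = \tfrac12(\productionvector_1 + \productionvector_2, \flowvector_1 + \flowvector_2)\).

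First, the midpoint is feasible, because every constraint of \eqISO is convex. The balance constraint \eqref{ISO:balance} reads \(g_\node(\productionvector,\flowvector) \le 0\) with \(g_\node\) convex quadratic in \(\flowvector\) and affine in \(\productionvector\). The objective \eqref{ISO:objective} is linear, so the midpoint also attains value \(v^*\) and is optimal. We then apply Corollary \ref{Kirchhoff-law}, which says the balance constraints are active at every optimum. To use it we need \(|\resistance \flow| < 1\) at the solutions. The bound \eqref{eq:natural-flow-bound} gives only \(\le\), so we must check strictness, for instance through the remark after the corollary, which says activeness holds without the strict-bound requirement. We assume this, so \(g_\node = 0\) at \((\productionvector_1,\flowvector_1)\), at \((\productionvector_2,\flowvector_2)\), and at the midpoint, for every \(\node\).

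Next, choose an edge \(\edge_0\) with \(\flow[\edge_0]^{(1)} \ne \flow[\edge_0]^{(2)}\), and let \(\node\) be either endpoint of \(\edge_0\). Since \(\resistance[\edge_0] > 0\), the map \(\flowletter \mapsto \tfrac{\resistance[\edge_0]}{2}\flowletter^2\) is strictly convex, while every other term of \(g_\node\) is convex, and the production and linear flow terms are affine. Hence
\[
g_\node(\productionvector_m,\flowvector_m) < \tfrac12 g_\node(\productionvector_1,\flowvector_1) + \tfrac12 g_\node(\productionvector_2,\flowvector_2) = 0 ,
\]
which contradicts activeness of the balance constraint at the optimal midpoint. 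Therefore \(\flowvector_1 = \flowvector_2\).

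The main obstacle is justifying activeness (Corollary \ref{Kirchhoff-law}) at every optimal solution without assuming \(|\resistance\flow|<1\). If the remark cannot be invoked directly, the fallback is a direct argument from the slack at the midpoint. At node \(\node\) with \(g_\node<0\), and using \(\truecost>0\), try reducing some positive production \(\production\) at \(\node\); this strictly lowers the cost and contradicts optimality. If \(\node\) has no positive production, instead shift the flows on the edges at \(\node\) slightly to move slack toward a neighbour that does produce, and use the resulting cost reduction there.
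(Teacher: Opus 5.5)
Your proof is correct and is the same as the paper's: the midpoint of the two solutions is again optimal by convexity, and strict convexity of \(\tfrac{r_e}{2}f_e^2\) on an edge where the flows differ makes the balance constraint at an endpoint of that edge strictly slack at the midpoint, contradicting Corollary~\ref{Kirchhoff-law}. You are more careful than the paper, which applies the corollary without checking its hypothesis \(|r_e f_e|<1\); citing the remark that follows the corollary is the right way to close that point, and you only need feasibility, not activeness, at the two original solutions to get strict slack at the midpoint.
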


\begin{proof}
	Suppose that \(\flowvector_1 \neq \flowvector_2\). Since the problem is convex, the midpoint \((\tilde{\productionvector}, \tilde{\flowvector}) = \qty(\frac{\productionvector_1 + \productionvector_2}{2}, \frac{\flowvector_1+\flowvector_2}{2})\) solves \eqISO. But since \(\resistance > 0\), the functions \(x \mapsto \frac{\resistance}{2} x^2\) are strictly convex so \((\tilde{\productionvector}, \tilde{\flowvector})\) does not satisfy equation \eqref{Kirchhoff-law-equation}. That contradicts corollary \ref{Kirchhoff-law}.
	\qed
\end{proof}

Proposition \ref{prop:flow-uniqueness} allows us to define the mapping \(\bidvector \mapsto \flowvector(\bidvector)\) which, to every bid profile \(\bidvector\), assigns the corresponding ISO flow solution. By Kirchhoff's law, the uniqueness of productions is already guaranteed at this step if \(\technumber = 1\). So in the following we assume \(\technumber \geq 2\). If we define
\[\nodalproduction(\flowvector) := \demand + \sum_{\edge \in \neighbourset}\qty(\frac{\resistance}{2}\flow^2 - \flow \sgn(\node, \edge)),\]
then we can deduce a linear production problem that is solved by \(\productionvector\) given the unique flow solution: If \((\productionvector, \flowvector(\bidvector))\) is a solution to \eqISO, then \(\productionvector\) solves
\begin{subequations} \label{eq:ISO-linear-production-problem}
	\begin{align}
		P(\flowvector(\bidvector)): \;
    & \min_{\productionvector}
    && \sum_{\node, \tech}
    (\bid + \pollutioncost \pollutionfactor)
    \production
    \\
    &\;\, \text{s.t.} \quad
    && \sum_{\tech} \production
    \geq \nodalproduction(\flowvector(\bidvector)),
    && \forall \node \in \nodeset,
    \label{eq:ISO-linear-production-problem-balance} \\
    &&& \sum_{\node, \tech} \pollutionfactor \production
    \leq \pollutionlimit,
    &&
    \label{eq:ISO-linear-production-problem-pollution} \\
    &&& 0 \leq \production \leq \maximumcapacity,
    && \forall \node \in \nodeset, \tech \in \techset.
    \label{eq:ISO-linear-production-problem-capacity}
	\end{align}
\end{subequations}

Next, we provide conditions under which the solution to problem \eqPF is unique. Our arguments make use of one of the characterizations provided by the Mangasarian LP uniqueness theorem (see Theorem 2, item (iii) of \cite{Mangasarian1979Uniqueness}), which states that a solution \(\overline{x}\) to the linear programming problem
\begin{align*}
	\min_{x \in \mathbb{R}^n} &c^\top x \\
				  & Ax \leq b, \\
				  & C x = d,
\end{align*}
is unique whenever there is no \(x \neq 0\) such that
\[
	Cx = 0, \qquad A_{J(\overline{x})} x \leq 0, \qquad c^\top x \leq 0.
\]
Here, \(J(\overline{x})\) denotes the set of indices for the inequality constraints which are active at \(\overline{x}\). Thus, we need to consider separately the cases for a solution with active and inactive pollution constraint. In the remainder, for a production solution \(\productionvector\), we define the sets
\begin{align*}
	\activeupperset (\productionvector) &= \{ (\node, \tech) \in \nodeset \times \techset : \production = \maximumcapacity \}, \\
	\activelowerset (\productionvector) &= \{ (\node, \tech) \in \nodeset \times \techset : \production = 0 \}.
\end{align*}
That is, the node-technology indices where \(\productionvector\) activates its upper or lower bounds.

\begin{proposition}[Production uniqueness, inactive pollution case] \label{prop:production-uniqueness-inactive}
	Let \(\productionvector\) be a solution to \eqPF for bids \(\bidvector\). Suppose that \(\sum_{\node, \tech} \pollutionfactor \production < \pollutionlimit\) and that the nodal net costs \((\isocost)_\tech\) are all distinct for every node \(\node\). Then, \(\productionvector\) is unique.
\end{proposition}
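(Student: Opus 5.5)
Since the pollution constraint \eqref{eq:ISO-linear-production-problem-pollution} is inactive at \(\productionvector\), my plan is to drop it and reduce \eqPF to a family of independent single-node problems. Uniqueness for each of these then follows from an ordering (greedy) argument that uses the distinct net costs \(\isocost\).

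\emph{Step 1 (relaxation).} Let \(P'\) be \eqPF with constraint \eqref{eq:ISO-linear-production-problem-pollution} removed. The feasible set of \(P'\) is a polyhedron containing that of \eqPF. Near \(\productionvector\) the two feasible sets coincide, because the pollution constraint holds strictly at \(\productionvector\). Hence \(\productionvector\) is a local minimizer of the linear program \(P'\), and by convexity a global one. Consequently \(\mathrm{val}(P') = \mathrm{val}\,\eqPF\). Any other solution \(\productionvector'\) of \eqPF is feasible for \(P'\) and attains this common value, so it also solves \(P'\). It therefore suffices to show that \(P'\) has a unique solution.

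\emph{Step 2 (separation).} Both the objective and the constraints of \(P'\) split over nodes. So \(P'\) decomposes into the nodal problems
\[
\min_{(\production)_\tech} \sum_{\tech} \isocost \production \quad \text{s.t.} \quad \sum_\tech \production \geq \nodalproduction(\flowvector(\bidvector)), \quad 0 \le \production \le \maximumcapacity ,
\]
and a solution of \(P'\) is unique if and only if each of these nodal problems has a unique solution.

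\emph{Step 3 (nodal uniqueness).} Fix \(\node\) and let \(\productionvector_\node\) be any optimal nodal production. First, \(\isocost = \bid + \pollutioncost\pollutionfactor \ge \truecost > 0\), assuming nonnegative pollution costs and factors as the model intends. Hence the balance constraint must be active: otherwise we could decrease some positive \(\production\) and strictly lower the cost.

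Second, use an exchange argument. Suppose \(\isocost[\node][\tech] < \isocost[\node][\sigma]\) with \(\production[\node][\sigma] > 0\) and \(\production[\node][\tech] < \maximumcapacity[\node][\tech]\). Shifting a small \(\epsilon\) of production from \(\sigma\) to \(\tech\) keeps feasibility and strictly decreases the cost, which contradicts optimality. So every optimal solution has the following form: order the technologies by strictly increasing \(\isocost\), which is possible since the costs are pairwise distinct at each node. The cheapest technologies are filled to capacity, at most one technology is fractional, and all remaining technologies are at zero. The total produced equals \(\nodalproduction(\flowvector(\bidvector))\).

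This ``fill in order'' description pins down every \(\production\) uniquely. Each production is \(\min\{\maximumcapacity, \max\{0, \nodalproduction - \sum_{\sigma \text{ cheaper}} \maximumcapacity[\node][\sigma]\}\}\). This proves uniqueness at each node, hence for \(P'\), hence for \eqPF.

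As an alternative route, one can phrase Step 3 through the Mangasarian criterion. Take a nonzero direction \(\deviationvector\) satisfying \(\sum_\tech \deviation \ge 0\), \(\deviation \ge 0\) on \(\activelowerset\), \(\deviation \le 0\) on \(\activeupperset\), and \(c^\top\deviationvector \le 0\). The ordering structure above forces \(\deviationvector = 0\).

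The main obstacle I expect is Step 1: one must justify carefully that dropping an inactive constraint preserves the whole solution set, not just the optimal value. Here the convexity argument is essential. A secondary point is making sure the net costs are strictly positive, so that the balance constraints are active. If \(\pollutioncost\pollutionfactor\) could be negative this would need the explicit sign convention of the model, and I would note it there.
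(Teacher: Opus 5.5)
Your proof is correct, but it takes a different route from the paper's. The paper neither drops the pollution constraint nor splits the problem. It applies Mangasarian's criterion directly to \eqPF, where the slack pollution constraint simply stays out of the active set, and takes a direction \(\deviationvector\) with \(\sum_\tech \deviation = 0\) at every node. It then compares \(m_\node=\min_{\mathcal{P}_\node}\isocost\) with \(M_\node=\max_{\mathcal{N}_\node}\isocost\) to show that some node carries an inverted pair \(\isocost[\node][\tech^+]<\isocost[\node][\tech^-]\) with \(\deviation[\node][\tech^+]>0>\deviation[\node][\tech^-]\), \(\tech^+\) below capacity and \(\tech^-\) above zero; your \(\epsilon\)-exchange then contradicts optimality. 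You instead use inactivity once, up front, to place the solution set of \eqPF inside that of a relaxation which decomposes into nodal fractional-knapsack problems, and you then write down the unique merit-order solution explicitly. Your route is more elementary (no LP uniqueness theorem), gives a closed form, and yields Proposition \ref{prop:technology-activation-inactive-pollution} for free. It also obtains activity of the balance constraints from \(\isocost>0\). The paper's system instead writes \eqref{eq:mang-inactive-balance} as an equality, i.e.\ it leans on Kirchhoff's law (Corollary \ref{Kirchhoff-law}) to treat the balance constraints as equalities. The paper's route pays off in the active case (Proposition \ref{prop:production-uniqueness-active}): there the pollution constraint couples the nodes, no decomposition is available, and the same Mangasarian template is reused. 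One small imprecision in your Step 3: if \(\nodalproduction<0\) at some node, the balance constraint can be slack with all productions zero, so ``the total produced equals \(\nodalproduction\)'' fails there. Your formula still returns zero at that node, which is then the unique minimizer because \(\isocost>0\), so the conclusion is unaffected.
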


\begin{proof}
	Let \(\deviationvector = (\deviation)_{\node, \tech}\) be such that
	\begin{align}
		\sum_{\node, \tech} \isocost \deviation & \leq 0, \tag{I1} \label{eq:mang-inactive-objective} \\
		\forall n : \sum_{\tech} \deviation & = 0, \tag{I2} \label{eq:mang-inactive-balance} \\
		\forall \node, \tech \in \activeupperset (\productionvector) : \deviation & \leq 0, \tag{I3a} \label{eq:mang-inactive-capacity-upper} \\
		\forall \node, \tech \in \activelowerset (\productionvector) : \deviation & \geq 0, \tag{I3b} \label{eq:mang-inactive-capacity-lower}
	\end{align}
	and define, for every \(\node\),
	\[
		\mathcal{P}_{\node} = \qty{ \tech \in \techset : \deviation > 0}, \qquad \mathcal{N}_{\node} = \qty{\tech \in \techset : \deviation < 0}.
	\]
	Note that by \eqref{eq:mang-inactive-balance}, \(\sum_{\tech \in \mathcal{P}_{\node}} \deviation = - \sum_{\tech \in \mathcal{N}_{\node}} \deviation\). Thus, either both \(\mathcal{P}_{\node}, \mathcal{N}_{\node}\) are empty or both are nonempty. For \(\node\) in the latter case, define
	\[
		m_{\node} = \min_{\tech \in \mathcal{P}_{\node}} \isocost, \qquad M_{\node} = \max_{\tech \in \mathcal{N}_{\node}} \isocost.
	\]
	Suppose now by contradiction that \(x \neq 0\). We will prove that
	\[
		\exists \node : M_{\node} \geq m_{\node}.
	\]
	Indeed, on the contrary we would have that \(M_{\node} < m_{\node}\) for all \(\node\), resulting in
	\begin{align*}
		\sum_{\node, \tech} \isocost \deviation &= \sum_{\node} \qty( \sum_{\tech \in \mathcal{P}_{\node}} \isocost \deviation + \sum_{\tech \in \mathcal{N}_{\node}} \isocost \deviation ) \\
							&\geq \sum_{\node} \qty( m_{\node} \sum_{\tech \in \mathcal{P}_{\node}} \deviation + M_{\node} \sum_{\tech \in \mathcal{N}_{\node}} \deviation ) \\
							& = \sum_{\node} \underbrace{(m_{\node} - M_{\node})}_{>0} \underbrace{\sum_{\tech \in \mathcal{P}_{\node}} \deviation}_{>0} > 0.
	\end{align*}
	Since this contradicts \eqref{eq:mang-inactive-balance}, the claim follows. Now, by hypothesis, since the \(\isocost\) are distinct, the claim holds with strict inequality. That is, there exists a node \(\node\) as well as \(\tech^+ \in \mathcal{P}_{\node}, \tech^- \in \mathcal{N}_{\node}\) such that \(\isocost[\node][\tech^+] < \isocost[\node][\tech^-] \). By using \eqref{eq:mang-inactive-capacity-upper}, \eqref{eq:mang-inactive-capacity-lower}, we then get that
	\[
		(\node, \tech^+) \notin \activeupperset (\productionvector), \qquad (\node, \tech^-) \notin \activelowerset (\productionvector).
	\]
	This means that a small enough modification of \(\productionvector\) augmenting \(\production[\node][\tech^+]\) and equally diminishing \(\production[\node][\tech^-]\) has strictly lower cost, contradicting the optimality of \(\productionvector\). Thus, \(\deviationvector=0\) and the uniqueness follows from Mangasarian's theorem.
	\qed
\end{proof}

We now consider the case of a production solution \(\productionvector\) with active pollution constraint. The argument given in the last proof is not sufficient for this case as the availability of a technology with lower cost does not contradict optimality whenever said technology is more polluting and the limit is binding. We therefore have to avoid a situation where the ISO can redistribute pollution while maintaining cost, both among technologies of the same agent (internal ties) and between different agents (external ties). This is encoded in the conditions of the next proposition.

\begin{proposition}[Production uniqueness, active pollution case] \label{prop:production-uniqueness-active}
	Let \(\productionvector\) be a solution to \eqPF for bids \(\bidvector\). Suppose that \(\sum_{\node, \tech} \pollutionfactor \production = \pollutionlimit\) and that the following conditions hold:
	\begin{enumerate}
		\item The nodal net costs \((\isocost)_\tech\) are all distinct for every node \(\node\).
		\item For all \(\node, \tech_1, \tech_2\), if \(a^*, b^*\) is the unique solution of the linear system
			\begin{align*}
				a - \pollutionfactor[\node][\tech_1] b &= \isocost[\node][\tech_1], \\
				a - \pollutionfactor[\node][\tech_2] b &= \isocost[\node][\tech_2],
			\end{align*}
			then for every \(\tech \neq \tech_1, \tech_2\), \(\isocost \neq a^* - \pollutionfactor b^*\).
		\item For every technology group \((\node_1, \tech_1), (\node_1, \tech_2), (\node_2, \tech_3), (\node_2, \tech_4)\) with \(\tech_1 \neq \tech_2\) and \(\tech_3 \neq \tech_4\), we have that
			\[
				\frac{\isocost[\node_1][\tech_1] - \isocost[\node_1][\tech_2] }{\pollutionfactor[\node_1][\tech_2] - \pollutionfactor[\node_1][\tech_1] } \neq \frac{\isocost[\node_2][\tech_3] - \isocost[\node_2][\tech_4] }{\pollutionfactor[\node_2][\tech_4] - \pollutionfactor[\node_2][\tech_3] }.
			\]
	\end{enumerate}
	Then, \(\productionvector\) is unique.
\end{proposition}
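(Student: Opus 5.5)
We follow the scheme of Proposition~\ref{prop:production-uniqueness-inactive} and apply Mangasarian's theorem, now with the pollution constraint active. We fix a direction \(\deviationvector\) satisfying \eqref{eq:mang-inactive-objective}--\eqref{eq:mang-inactive-capacity-lower}, together with the extra condition
\[\sum_{\node,\tech}\pollutionfactor\deviation \le 0.\]
The goal is to show \(\deviationvector = 0\).

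Instead of a direct combinatorial argument, we pass through the dual of \eqPF. Since \(\productionvector\) is optimal for an LP, there exist multipliers \(\price\ge 0\), \(\tax\ge 0\) and \(\kktcapacitylower,\kktcapacityupper\ge0\) such that
\[\isocost + \tax\pollutionfactor = \price + \kktcapacitylower - \kktcapacityupper,\]
with complementary slackness. Multiply by \(\deviation\) and sum. The \(\price\) term vanishes by \eqref{eq:mang-inactive-balance}. The capacity terms satisfy \(\kktcapacitylower\deviation\ge0\) and \(-\kktcapacityupper\deviation\ge0\) by (I3a)--(I3b) and complementarity. Combined with \(\sum\isocost\deviation\le0\) and \(\tax\sum\pollutionfactor\deviation\le0\), all of these nonnegative or nonpositive pieces must vanish. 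We conclude that \(\sum\isocost\deviation=0\) (when \(\tax>0\), \(\sum\pollutionfactor\deviation=0\) as well) and that \(\deviation=0\) wherever a capacity multiplier is strictly positive. Consequently every \((\node,\tech)\) with \(\deviation\neq0\) satisfies the reduced-cost equality \(\isocost=\price-\tax\pollutionfactor\). In other words, the point \((\pollutionfactor,\isocost)\) lies on the line \(c = \price - \tax z\) associated with node \(\node\).

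Now count supporting points. Call a node \emph{moving} if \(\deviation[\node][\cdot]\neq0\). By \eqref{eq:mang-inactive-balance}, a moving node has at least two technologies with \(\deviation\neq0\), and these lie on the line above. By condition 2, no third technology of that node can lie on the line through two of them. Hence each moving node has exactly two moving technologies \(\tech_1,\tech_2\), and they satisfy \(\deviation[\node][\tech_1]=-\deviation[\node][\tech_2]\).

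There are two cases, according to whether \(\tax=0\) or \(\tax>0\).

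\textbf{Case \(\tax=0\).} The two moving technologies satisfy \(\isocost[\node][\tech_1]=\isocost[\node][\tech_2]=\price\). This contradicts condition 1, so no node moves and \(\deviationvector=0\).

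\textbf{Case \(\tax>0\).} For each moving node, the slope of its line gives
\[\tax = \frac{\isocost[\node][\tech_1]-\isocost[\node][\tech_2]}{\pollutionfactor[\node][\tech_2]-\pollutionfactor[\node][\tech_1]},\]
where the denominator is nonzero because the pollution factors are distinct. If two different nodes moved, both ratios would equal \(\tax\), contradicting condition 3. So at most one node \(\node\) moves, and its direction is \(\deviation[\node][\tech_1]=t\), \(\deviation[\node][\tech_2]=-t\). The pollution equality \(\sum\pollutionfactor\deviation=0\) then reads
\[t\,(\pollutionfactor[\node][\tech_1]-\pollutionfactor[\node][\tech_2])=0,\]
which forces \(t=0\) by distinctness of the pollution factors. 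Hence \(\deviationvector=0\) in this case too, and Mangasarian's theorem yields uniqueness.

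The main obstacle is the first step: converting Mangasarian's conditions into exact reduced-cost equalities via the LP multipliers. This requires checking carefully that each term in the multiplied identity has a definite sign. Care is also needed with the edge case \(\tax=0\) while the pollution constraint is active; there only \(\sum\pollutionfactor\deviation\le0\) is available, but the argument above does not need the equality in that case, since condition 1 alone rules out any moving node.
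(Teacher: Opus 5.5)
Your proof is correct and follows the paper's argument. The shared steps are Mangasarian's criterion, pairing the direction with the LP multipliers to get the reduced-cost equality $c_{n,\tau} + \zeta z_{n,\tau} = \lambda_n$ on the support of $x$, condition 2 to limit each moving node to exactly two technologies, and condition 3 to rule out two moving nodes. The only difference is the last step: you split on $\zeta$ and rule out a single moving node via the pollution equality $\sum_{n,\tau} z_{n,\tau} x_{n,\tau} = 0$ and distinct pollution factors, whereas the paper uses the binding objective $\sum_{n,\tau} c_{n,\tau} x_{n,\tau} = 0$ with condition 1 to force a second moving node, which avoids the case distinction.
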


\begin{proof}
	Let \(\deviationvector = (\deviation)_{\node, \tech}\) be such that
	\begin{align}
		\sum_{\node, \tech} \isocost \deviation & \leq 0, \tag{A1} \label{eq:mang-active-objective} \\
		\forall \node : \sum_{\tech} \deviation &= 0, \tag{A2} \label{eq:mang-active-balance} \\
		\forall \node, \tech \in \activeupperset (\productionvector) : \deviation & \leq 0, \tag{A3a} \label{eq:mang-active-capacity-upper} \\
		\forall \node, \tech \in \activelowerset (\productionvector) : \deviation & \geq 0, \tag{A3b} \label{eq:mang-active-capacity-lower} \\
		\sum_{\node, \tech} \pollutionfactor \deviation & \leq 0. \tag{A4} \label{eq:mang-active-pollution}
	\end{align}
	Since \(\productionvector\) is a solution to \eqPF, there exist multipliers \(\pricevector, \tax, \kktcapacityuppervector, \kktcapacitylowervector \geq 0\) such that the following first order condition is satisfied
	\[
		\forall \node, \tech : \isocost + \tax \pollutionfactor - \price + \kktcapacityupper - \kktcapacitylower = 0.
	\]
	This means depending on the technology \(\node, \tech\), we have the following cases:
	\[
		\begin{cases}
			\isocost + \tax \pollutionfactor - \price \leq 0, & \node, \tech \in \activeupperset (\productionvector), \\
			\isocost + \tax \pollutionfactor - \price \geq 0, & \node, \tech \in \activelowerset (\productionvector), \\
			\isocost + \tax \pollutionfactor - \price = 0, & \text{ else.}
		\end{cases}
	\]
	From this as well as conditions \eqref{eq:mang-active-capacity-upper} and \eqref{eq:mang-active-capacity-lower}, we deduce
	\begin{equation} \label{eq:claim-uniqueness-positivity}
		\forall \node, \tech : (\isocost + \tax \pollutionfactor - \price) \deviation \geq 0
	\end{equation}
	which, in particular, implies
	\[
		\sum_{\node, \tech} (\isocost + \tax \pollutionfactor) \deviation \geq \sum_{\node, \tech} \price \deviation.
	\]
	Now, note that by \eqref{eq:mang-active-objective}, \eqref{eq:mang-active-pollution} and \eqref{eq:mang-active-balance},
	\begin{gather*}
		0 \geq \sum_{\node, \tech}\isocost \deviation = \sum_{\node, \tech} (\isocost + \tax \pollutionfactor) \deviation - \tax \sum_{\node, \tech} \pollutionfactor \deviation \\
		\geq \sum_{\node, \tech} (\isocost + \tax \pollutionfactor) \deviation \geq \sum_{\node, \tech} \price \deviation = \sum_\node \price \sum_{\tech} \deviation = 0.
	\end{gather*}
	Therefore, all inequalities are binding, meaning that \(\sum_{\node, \tech} \isocost \deviation = 0\) and
	\[
		\sum_{\node, \tech} (\isocost + \tax \pollutionfactor - \price) \deviation = 0.
	\]
	Along with condition \eqref{eq:claim-uniqueness-positivity}, we get that all the terms in the sum must be zero. We conclude the following complementarity condition:
	\begin{equation} \label{eq:claim-uniqueness-complementarity}
		\forall \node, \tech : (\isocost + \tax \pollutionfactor - \price = 0) \vee (\deviation = 0).
	\end{equation}
	Having this, we proceed by contradiction. If \(\deviationvector \neq 0\), there is a node \(\node_1\) and, by \eqref{eq:mang-active-balance}, at least two technologies \(\tech_1, \tech_2\) with \(\deviation[\node_1][\tech_1] , \deviation[\node_1][\tech_2] \neq 0\). Using \eqref{eq:claim-uniqueness-complementarity}, we obtain the following linear system
	\begin{align*}
		\price[\node_1] - \tax \pollutionfactor[\node_1][\tech_1] &= \isocost[\node_1][\tech_1] , \\
		\price[\node_1] - \tax \pollutionfactor[\node_1][\tech_2] &= \isocost[\node_1][\tech_2] .
	\end{align*}
	Solving for \(\tax\), we get the unique solution
	\[
		\tax = \frac{\isocost[\node_1][\tech_1] - \isocost[\node_1][\tech_2] }{\pollutionfactor[\node_1][\tech_2] - \pollutionfactor[\node_1][\tech_1] }.
	\]
	By hypothesis 2, these equalities cannot hold for any other \(\tech \neq \tech_1, \tech_2\), thus by \eqref{eq:claim-uniqueness-positivity}, the rest of the \(\deviation[\node_1][\tech] \) must be zero. Having this, we evaluate \eqref{eq:mang-active-objective} which we know holds with equality
	\[
		\sum_{\node \neq \node_1} \sum_{\tech} \isocost \deviation + \deviation[\node_1][\tech_1] (\isocost[\node_1][\tech_1] - \isocost[\node_1][\tech_2] ) = 0.
	\]
	By hypothesis 1, the second term is nonzero. Therefore another node \(\node_2 \neq \node_1\) has two technologies \(\tech_3 \neq \tech_4\) with \(\deviation[\node_2][\tech_3] , \deviation[\node_2][\tech_4] \neq 0\). Evaluating \eqref{eq:claim-uniqueness-complementarity} for these technologies, we get a similar linear system for \((\price[\node_2], \tax)\) having the solution
	\[
		\tax = \frac{\isocost[\node_2][\tech_3] - \isocost[\node_2][\tech_4] }{\pollutionfactor[\node_2][\tech_4] - \pollutionfactor[\node_2][\tech_3] },
	\]
	which contradicts the earlier solution in light of hypothesis 3. We conclude \(\deviationvector = 0\) and the uniqueness follows by Mangasarian's theorem.
	\qed
\end{proof}

From Propositions \ref{prop:flow-uniqueness}, \ref{prop:production-uniqueness-inactive} and \ref{prop:production-uniqueness-active}, the following theorem immediately follows:

\begin{theorem}[ISO primal uniqueness] \label{thm:ISO-primal-uniqueness}
	Let \((\productionvector, \flowvector)\) be a solution to \eqref{eq:ISO} for bids \(\bidvector\). If the following conditions hold:
	\begin{enumerate}
		\item For all \(\edge \in \edgeset\), \(\resistance > 0\).
		\item The nodal net costs \((\isocost)_\tech\) are all distinct for every node \(\node\).
		\item For all \(\node, \tech_1, \tech_2\), if \(a^*, b^*\) is the unique solution of the linear system
			\begin{align*}
				a - \pollutionfactor[\node][\tech_1] b &= \isocost[\node][\tech_1], \\
				a - \pollutionfactor[\node][\tech_2] b &= \isocost[\node][\tech_2],
			\end{align*}
			then for every \(\tech \neq \tech_1, \tech_2\), \(\isocost \neq a^* - \pollutionfactor b^*\).
		\item For every technology group \((\node_1, \tech_1), (\node_1, \tech_2), (\node_2, \tech_3), (\node_2, \tech_4)\) with \(\tech_1 \neq \tech_2\) and \(\tech_3 \neq \tech_4\), we have that
			\[
				\frac{\isocost[\node_1][\tech_1] - \isocost[\node_1][\tech_2] }{\pollutionfactor[\node_1][\tech_2] - \pollutionfactor[\node_1][\tech_1] } \neq \frac{\isocost[\node_2][\tech_3] - \isocost[\node_2][\tech_4] }{\pollutionfactor[\node_2][\tech_4] - \pollutionfactor[\node_2][\tech_3] }.
			\]
	\end{enumerate}
	then \((\productionvector, \flowvector)\) is unique.
\end{theorem}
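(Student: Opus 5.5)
The plan is to assemble the theorem from the preceding propositions in two stages, first flows and then productions. Take any two solutions \((\productionvector_1, \flowvector_1)\) and \((\productionvector_2, \flowvector_2)\) of \eqISO for the same bid profile \(\bidvector\). Condition 1 (\(\resistance > 0\)) is exactly the hypothesis of Proposition \ref{prop:flow-uniqueness}, so \(\flowvector_1 = \flowvector_2 =: \flowvector(\bidvector)\). That proposition relies on Corollary \ref{Kirchhoff-law}, which in turn assumes \(|\resistance \flow| < 1\). I will state this dependence explicitly, or argue it from the natural bound \eqref{eq:natural-flow-bound} together with the competitiveness assumption. Alternatively, I will invoke the remark after the corollary, which says Kirchhoff's law holds more generally. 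In either case this is the step where a hidden hypothesis must be checked rather than newly proved.

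With the flow fixed, both \(\productionvector_1\) and \(\productionvector_2\) solve the linear program \eqPF with the same right-hand sides \(\nodalproduction(\flowvector(\bidvector))\). The reason is that any feasible \(\productionvector\) of \eqPF paired with \(\flowvector(\bidvector)\) is feasible for \eqISO, and the objective depends only on \(\productionvector\). So it suffices to show that \eqPF has a unique solution. Fix \(\productionvector_1\) and split into two cases according to whether the pollution constraint \eqref{eq:ISO-linear-production-problem-pollution} is inactive or active at \(\productionvector_1\).

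In the inactive case, condition 2 gives the hypothesis of Proposition \ref{prop:production-uniqueness-inactive}, so \(\productionvector_1\) is the unique solution of \eqPF and \(\productionvector_2 = \productionvector_1\). In the active case, conditions 2, 3 and 4 are exactly hypotheses 1, 2 and 3 of Proposition \ref{prop:production-uniqueness-active}, which again yields \(\productionvector_2 = \productionvector_1\).

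These propositions use Mangasarian's characterization, which certifies that a given solution is the unique one, so no symmetry argument between \(\productionvector_1\) and \(\productionvector_2\) is needed. Combining the two stages gives \((\productionvector_1, \flowvector_1) = (\productionvector_2, \flowvector_2)\).
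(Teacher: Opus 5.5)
Your proposal is correct and is the paper's argument: the paper presents the theorem as an immediate consequence of Propositions~\ref{prop:flow-uniqueness}, \ref{prop:production-uniqueness-inactive} and \ref{prop:production-uniqueness-active}, combined exactly as you do (unique flow, reduction to the linear production problem, case split on the pollution constraint, and Mangasarian's certificate that the given solution is the only one). Your caveat about the hypothesis $|r_e f_e| < 1$ inherited from Corollary~\ref{Kirchhoff-law} is well placed, since the paper also leaves it unaddressed, and of your fixes the remark is the one that works. Equivalently, you can show directly from the KKT conditions (via the edge equations, positive bids and connectedness) that no nodal price vanishes, which forces $|r_e f_e|<1$; by contrast, the natural bound only gives the non-strict inequality, and the competitiveness condition is not what makes it strict.
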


\begin{remark}[Tie-breaking rule] \label{rmk:tie-breaking}
	Whenever the conditions for Theorem \ref{thm:ISO-primal-uniqueness} are not met, we still know that the flow solution is unique and that the optimal production set can then be written as a bounded polyhedron in \(\R^{\nodenumber \technumber}\) given by the solution set of the linear problem \eqPF. Therefore, for completeness, let us introduce the convention that whenever indifferent, the ISO will choose as its solution the unique corresponding flow along with the midpoint of the optimal polyhedron calculated as the equally weighted average of its vertices.
\end{remark}

\begin{example}
	For an example where ISO uniqueness does not hold due to external ties, consider the two-node electrical network depicted below, where the pollution cost is set to \(\pollutioncost = 1\) for both technologies and the pollution limit is set to \(\pollutionlimit = 5\). Since the network is symmetric, the solution must have zero flow, resulting in every node satisfying its own demand. However, even though the preferred technology (in terms of net cost \(\isocost\)) is the second one, it cannot be used in both nodes, as that would violate the pollution limit. The ISO must then distribute the emitted pollution among the nodes, but is indifferent as to how to do it. Clearly, the last hypothesis of Theorem \ref{thm:ISO-primal-uniqueness} does not hold in this example.
	\begin{figure}[htbp]
		\centering
		\begin{tikzpicture}[node distance=4cm]
			\node[draw, circle, align=center] (node0) at (0,0) {
					0 \\
					$\demand[0] = 3$, \\
					$\bidvector_0 = (2, 0.5)$, \\
					$\pollutionfactorvector_0 = (0.5, 1)$
				};

			\node[draw, circle, align=center, right=of node0] (node1) {
					1 \\
					$\demand[1] = 3$, \\
					$\bidvector_1 = (2, 0.5)$, \\
					$\pollutionfactorvector_1 = (0.5, 1)$
				};

			\draw[->, thick] (node0.east) -- (node1.west) 
				node[midway, above, align=center] {$\resistanceletter = 0.025$};
		\end{tikzpicture}
	\end{figure}
\end{example}

We now discuss the uniqueness of multipliers \((\pricevector, \tax, \kktcapacitylowervector, \kktcapacityuppervector)\) associated to a given solution \((\productionvector, \flowvector)\) to the ISO problem. By Slater's condition, we get the Mangasarian Fromovitz constraint qualification, which is in turn equivalent to the multipliers being bounded (see \cite{peterson1973qualifications}). But uniqueness is only implied by the stronger Linear Independence Constraint Qualification condition (LICQ), which requires a priori assumptions on the ISO solution \((\productionvector, \flowvector)\) similar to the ones presented in Proposition 2.6 from \cite{DidierAusselDeregulated}. Those assumptions do not generally hold in many relevant network instances. Thus, one might wish for a more general result ensuring global multiplier uniqueness (that is, uniqueness of multipliers for every possible bid combination). We obtain one by making use of convex duality principles in what follows.

\begin{theorem}[Uniqueness of Multipliers] \label{thm:ISO-dual-uniqueness}
	If there is no selection \(S \subseteq N \times T\) such that \(\pollutionlimit = \sum_{(\node, \tech) \in S} \pollutionfactor \maximumcapacity\), then the Lagrange multipliers associated to the ISO's problem are unique for every \(\bidvector \in \bidspace\).
\end{theorem}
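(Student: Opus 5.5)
The plan is to parametrize the multiplier set explicitly by two scalars and then show that the constraints pinning these scalars always have a unique solution. Fix $\bidvector \in \bidspace$ and an ISO solution $(\productionvector, \flowvector)$.

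\textbf{Convex duality reduction.} Since \eqISO is convex and satisfies Slater's condition, the set of KKT multipliers is the same for every primal solution: it equals the solution set of the dual problem. It is nonempty, convex and compact by \cite{peterson1973qualifications}. We may therefore work with the single fixed primal solution, whose flow $\flowvector(\bidvector)$ is unique by Proposition \ref{prop:flow-uniqueness}. The claim reduces to showing that this convex set is a singleton.

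\textbf{Reduction to two scalars.} As in the proof of Proposition \ref{prop:price-positivity}, \eqref{KKT-edges} gives $\price[\text{out}] = \frac{1-\resistance\flow}{1+\resistance\flow}\price[\text{in}]$ on every edge. By connectedness, every admissible price vector has the form $\pricevector = t\,\boldsymbol{\rho}$, where $\boldsymbol{\rho}>0$ depends only on $\flowvector$ and $t\ge 0$ is a scalar. The capacity multipliers are then determined by $(t,\tax)$ through \eqref{KKT-nodes}:
\begin{itemize}
\item when $0<\production<\maximumcapacity$, both $\kktcapacitylower$ and $\kktcapacityupper$ vanish;
\item when the variable sits at a bound, only one of the two can be nonzero, because $\maximumcapacity>0$, and it equals the residual $\pm(\isocost + \tax\pollutionfactor - \price)$.
\end{itemize}
So the multiplier set is the image of a convex compact set $K\subseteq\R^2$ of admissible pairs $(t,\tax)$. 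It suffices to show that $K$ is a single point.

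\textbf{Pinning $(t,\tax)$ via interior variables.} Every interior variable $0<\production<\maximumcapacity$ yields the linear equation $t\rho_\node = \isocost + \tax\pollutionfactor$. The hypothesis supplies the interior variables we need.
\begin{itemize}
\item If the pollution constraint is active, every production is at $0$ or $\maximumcapacity$, and $S=\activeupperset(\productionvector)$, then $\pollutionlimit = \sum_{S}\pollutionfactor\maximumcapacity$. The hypothesis forbids this, so at least one interior variable $(\node_0,\tech_0)$ exists.
\item If the pollution constraint is inactive, then $\tax=0$ by \eqref{CS-pollution}. One interior variable then fixes $t = \isocost[\node_0][\tech_0]/\rho_{\node_0}$, and we are done.
\item If no interior variable exists in the inactive case, I would perturb instead. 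By Kirchhoff's law (Corollary \ref{Kirchhoff-law}), $\sum_\tech \production = \nodalproduction(\flowvector)$ holds with some node carrying positive demand. A bound-saturated configuration balancing all nodes exactly is then non-generic. I would try to exclude it by the same perturbation argument as in the active case below. This is a place where the argument may need an extra genericity remark.
\end{itemize}

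\textbf{Main obstacle: the active case.} When the pollution constraint is active, the single equation $t\rho_{\node_0} = \isocost[\node_0][\tech_0] + \tax\pollutionfactor[\node_0][\tech_0]$ only confines $K$ to a line. A second independent relation is needed.

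My first attempt uses the dual characterization $K \cong -\partial v$, where $v$ is the optimal value viewed as a function of the right-hand sides. Uniqueness then amounts to differentiability of $v$ in the pollution-limit direction $\pollutionlimit$; the demand direction is already handled by the ray structure.

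\begin{enumerate}
\item Perturb $\pollutionlimit$ to $\pollutionlimit\pm\epsilon$.
\item Because $\pollutionlimit$ is not a sum of saturated emissions, the interior variable $\production[\node_0][\tech_0]$ can absorb the change. Combined with a compensating adjustment restoring \eqref{ISO:balance}, either through flows or through another technology at the same node (made possible by the distinct pollution factors), this gives feasible points on both sides.
\item Estimate the cost change on each side. The aim is to show that the left and right derivatives of $v$ in $\pollutionlimit$ coincide.
\end{enumerate}

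Equal one-sided derivatives fix $\tax$, and then the interior equation fixes $t$, so $K$ is a singleton. Verifying that the two one-sided rates agree is the delicate step. If the direct estimate fails, I would fall back on a Mangasarian-type argument applied to the dual LP obtained after fixing $\flowvector$, mirroring the proof of Proposition \ref{prop:production-uniqueness-active}.
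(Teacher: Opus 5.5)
Your reduction to the pair $(t,\zeta)$ is correct, and it is sharper than the paper's argument. The paper uses the subset-sum hypothesis to show that $\zeta$ is unique for each \emph{fixed} $\lambda$, because $g$ then has no flat piece. It then appeals to strict concavity of $\sum_e \frac{2}{r_e}\frac{\lambda_n\lambda_m}{\lambda_n+\lambda_m}$. But that term is positively homogeneous of degree one, hence linear along exactly the ray $\{t\rho\}$ you isolated, so the paper never pins down $t$ either. The two steps you leave open are genuine gaps: the inactive case with every production at a bound, and the active case with a single interior variable, where you need the one-sided rates in $\overline{L}$ to agree. They cannot be filled under the stated hypothesis, because the multipliers really are non-unique there.

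For the active case, consider the following instance.
\begin{itemize}
\item Two nodes joined by an edge $1\to 2$ with $r=0.025$, $d_1=3$, $d_2=0$, zero pollution costs and $\overline{L}=2.5$.
\item Node $1$ has $z=(0.5,1)$, capacities $(10,2)$ and bids $(2,0.5)$.
\item Node $2$ has $z=(0.3,0.7)$, capacities $(0.01,0.01)$ and bids $(100,101)$.
\end{itemize}
The products $z\,\overline{q}$ are $5, 2, 0.003, 0.007$, and no subset of them sums to $2.5$. Slater's condition and the competitiveness condition hold. The ISO solution is $f=0$, $q_{1,0}=1$, $q_{1,1}=2=\overline{q}_{1,1}$, $q_{2,\tau}=0$, with the pollution constraint active. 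For every $\zeta\in[0,3]$, the choice $\lambda_1=\lambda_2=2+\zeta/2$, $\overline{\mu}_{1,1}=(3-\zeta)/2$, $\mu_{2,\tau}=c_{2,\tau}+\zeta z_{2,\tau}-\lambda_2>0$, with all other bound multipliers zero, satisfies (KKT). In your language: raising $\overline{L}$ saves nothing because the dirty technology is already at capacity (rate $0$). Lowering it forces a clean-for-dirty swap at rate $3$. So the compensating adjustment you invoke works in one direction only.

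The inactive case fails the same way. Give node $1$ a cheap technology whose capacity equals $d_1$ exactly, plus an expensive one so that Slater holds. Make node $2$'s technologies expensive, and take $\overline{L}$ above every subset sum. Then every production sits at a bound, and $t$ ranges over the whole interval between the cheap and the expensive bids.

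So what is missing is not a finer estimate but an extra nondegeneracy hypothesis on the ISO solution. One such condition: the equations $t\rho_n=c_{n,\tau}+\zeta z_{n,\tau}$ over the interior pairs $(n,\tau)$ have rank $1$ when the pollution constraint is inactive, and rank $2$ when it is active (for instance, two interior technologies at one node). With such an assumption your argument closes at once. Without it, neither the perturbation estimate nor a Mangasarian-type fallback can succeed.
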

\begin{proof}
	We know from classical optimization theory that KKT multipliers for \eqISO are solutions to its associated Lagrange dual problem, which is given by
	\begin{align*}
    & \max_{\pricevector, \tax, \kktcapacitylowervector, \kktcapacityuppervector \geq 0} 
    && \!\!\!\!\!\!\!\sum_{e = (n, m) \in \edgeset} \frac{2}{\resistance} \frac{\price[n] \price[m]}{\price[n] + \price[m]}\! +\! \sum_{\node} \!\!\qty(\demand\! -\!\!\! \sum_{\edge \in \neighbourset} \frac{1}{2 \resistance}) \price - \sum_{\node, \tech} \maximumcapacity \kktcapacityupper - \pollutionlimit \tax \\
    & \quad\; \text{s.t.} \quad
    && \bid + \pollutioncost \pollutionfactor - \price + \tax \pollutionfactor + \kktcapacityupper - \kktcapacitylower = 0, \quad \forall \node \in \nodeset, \tech \in \techset .
	\end{align*}
	We may eliminate the variables \(\kktcapacitylower\) as they are slack variables. With that, the maximization over \(\kktcapacityupper\) is also trivial and results in
	\[
		\kktcapacityupper = \max \qty{0, \price - \isocost - \tax \pollutionfactor}.
	\]
	Maximizing then over \(\tax\) results in the following one-dimensional minimization problem:
	\begin{equation}\label{eq:ISO-dual-problem-zeta}
		\text{D}(\pricevector) : \min_{\tax \geq 0} g(\tax) := \pollutionlimit \tax + \sum_{\node, \tech} \maximumcapacity \max\qty{0, \price - \isocost - \tax \pollutionfactor}. \notag
	\end{equation}
	This is a convex piecewise linear minimization problem. So we may use the known subgradient optimality condition \(0 \in \partial g(\tax^*)\). The subgradient can be computed using the Moreau-Rockafellar theorem (see Theorem 23.8 in \cite{rockafellar1997convex}), the computation gives
	\[
		\partial g(\tax) = \pollutionlimit - \sum_{\node, \tech : \tax < \tax_{\node, \tech}^*} \maximumcapacity \pollutionfactor - \sum_{\node, \tech : \tax = \tax_{\node, \tech}^*} [0, \pollutionfactor] \maximumcapacity.
	\]
	where the breakpoints are given by
	\[
		\tax_{\node, \tech}^* = \frac{\price - \isocost}{\pollutionfactor}.
	\]
	This subdifferential takes the form of a non-decreasing step function outside the breakpoints. The step values are given by \(\pollutionlimit - \sum_{\node, \tech \in S} \maximumcapacity \pollutionfactor\) for some set \(S \subseteq \nodeset \times \techset\). By the hypothesis, \(0\) is never one of these constant values, so the multiplier \(\tax^*\) is necessarily \(0\) or one of the breakpoints \(\tax_{\node, \tech}^*\), and by monotonicity of the step function, the uniqueness of \(\tax\) in terms of \(\pricevector\) follows. By a standard result in convex analysis (see e.g. Proposition 3.3.1 in \cite{bertsekas2009}), we get that the (negative) value of problem \eqD is concave as a function of \(\pricevector\). The uniqueness of \(\pricevector\) itself follows then immediately by the strict concavity of the first term in the dual problem. This concludes the proof.
	\qed
\end{proof}

We now derive qualitative properties that must hold for the solutions of \eqISO satisfying the uniqueness assumptions of Theorems \ref{thm:ISO-primal-uniqueness} and \ref{thm:ISO-dual-uniqueness}. These properties provide a description of how the ISO solutions can behave at optimality and, by extension, at equilibrium. First, observe that, if the pollution limit is not active at a solution \(\productionvector\), then problem \eqPF decomposes into \(\nodenumber\) problems of nodal production dispatch. These problems have the known structure of \textit{fractional knapsack problems} (see e.g Chapter 17 of \cite{KorteVygenCombinatorial}) and therefore, the form of their solutions is well known. From this, the following result can be easily obtained.

\begin{proposition}[Activation of production bounds: Inactive pollution case] \label{prop:technology-activation-inactive-pollution}
	Let $(\productionvector, \flowvector)$ be a solution to \eqISO for \(\bidvector\) satisfying the hypothesis for Proposition \ref{prop:production-uniqueness-inactive}. Assume $(\productionvector, \flowvector)$ does not activate the pollution constraint. Then, for every node $\node$ there exists at most one technology $\tech_\node$ with $\production[\node][\tech_\node] \in (0, \maximumcapacity[\node][\tech_\node])$.
\end{proposition}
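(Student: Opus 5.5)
We argue by contradiction, with a local exchange argument. This mirrors the final step of the proof of Proposition \ref{prop:production-uniqueness-inactive}, and we do not need the explicit fractional knapsack structure.

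First we reduce to \eqPF. By Proposition \ref{prop:flow-uniqueness} the flow \(\flowvector = \flowvector(\bidvector)\) is fixed, so \(\productionvector\) is an optimal solution of \eqPF. Since the pollution constraint \eqref{eq:ISO-linear-production-problem-pollution} is inactive at \(\productionvector\), it stays satisfied under any sufficiently small perturbation of \(\productionvector\). Locally around \(\productionvector\), feasibility therefore depends only on the nodal balance constraints \eqref{eq:ISO-linear-production-problem-balance} and the box constraints \eqref{eq:ISO-linear-production-problem-capacity}.

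Now suppose that for some node \(\node\) there are two distinct technologies \(\tech_1 \neq \tech_2\) with \(\production[\node][\tech_1] \in (0, \maximumcapacity[\node][\tech_1])\) and \(\production[\node][\tech_2] \in (0, \maximumcapacity[\node][\tech_2])\). By hypothesis 2 of Proposition \ref{prop:production-uniqueness-inactive}, the net costs \(\isocost[\node][\tech_1]\) and \(\isocost[\node][\tech_2]\) are distinct. Without loss of generality assume \(\isocost[\node][\tech_1] < \isocost[\node][\tech_2]\).

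For small \(\epsilon > 0\), define \(\productionvector^\epsilon\) from \(\productionvector\) by increasing \(\production[\node][\tech_1]\) by \(\epsilon\) and decreasing \(\production[\node][\tech_2]\) by \(\epsilon\). This perturbation is feasible for the following reasons:
\begin{itemize}
\item the nodal sum \(\sum_\tech \production\) is unchanged, so \eqref{eq:ISO-linear-production-problem-balance} still holds at every node;
\item both perturbed entries stay inside their open intervals for \(\epsilon\) small, so \eqref{eq:ISO-linear-production-problem-capacity} holds;
\item the pollution constraint changes by only \(\epsilon(\pollutionfactor[\node][\tech_1] - \pollutionfactor[\node][\tech_2])\), which keeps it strictly satisfied for \(\epsilon\) small, since it had slack.
\end{itemize}
The objective changes by \(\epsilon(\isocost[\node][\tech_1] - \isocost[\node][\tech_2]) < 0\). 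This contradicts the optimality of \(\productionvector\) for \eqPF, and hence for \eqISO, since \((\productionvector^\epsilon, \flowvector)\) is feasible for \eqISO with the same flows.

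The only delicate point is making sure the exchange is admissible, in particular that it respects the pollution constraint. The inactivity assumption handles this through the choice of \(\epsilon\) small.

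Alternatively, one could argue via \eqKKT. With \(\tax = 0\) by \eqref{CS-pollution}, interior production forces \(\kktcapacitylower[\node][\tech] = \kktcapacityupper[\node][\tech] = 0\) by \eqref{CS-capacity-lower} and \eqref{CS-capacity-upper}, so \eqref{KKT-nodes} gives \(\isocost[\node][\tech_1] = \price = \isocost[\node][\tech_2]\), contradicting distinctness. This version also works and is shorter, but the exchange argument avoids invoking the existence of multipliers.
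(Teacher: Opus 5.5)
Your proof is correct. It takes a more elementary route than the paper, which gives no written proof. The paper's sketch goes as follows: once the pollution constraint is slack, \eqPF decomposes into $N$ independent nodal dispatch problems of fractional-knapsack type. It then invokes the well-known greedy (merit-order) form of their solutions, in which technologies are filled in increasing order of $c_{n,\tau}$, so at most one is partially dispatched.

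You instead carry out the exchange argument directly at the given solution. This is the same step that closes the proof of Proposition \ref{prop:production-uniqueness-inactive}, and the same mechanism behind greedy optimality for fractional knapsacks. Your route has several advantages:
\begin{itemize}
\item It is self-contained.
\item It uses only the slack in \eqref{ISO:pollution} and the distinctness of the two net costs at the offending node.
\item It applies to every optimal solution without appealing to uniqueness.
\item It sidesteps two steps the paper leaves implicit. First, that the slack constraint may be dropped globally; this holds because your point is then a local, hence global, minimizer of the convex relaxed problem. Second, that the given optimal solution has the greedy form; this needs either the uniqueness from Proposition \ref{prop:production-uniqueness-inactive} or an exchange argument like yours.
\end{itemize}

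The paper's route gives a stronger qualitative picture: at each node, technologies cheaper than the marginal one sit at capacity and more expensive ones sit at zero, not merely at most one is interior.

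Your KKT variant is also valid; it just needs the existence of multipliers, which Slater's condition supplies. The detour through flow uniqueness and \eqPF is harmless but unnecessary. You already conclude directly that $(\boldsymbol{q}^{\varepsilon},\boldsymbol{f})$ is feasible for the ISO problem with strictly smaller cost.
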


In the other case (that is, when the pollution constraint is active) we do not have a classical problem to compare against, but we can make use of ideas from the proof of Proposition \ref{prop:production-uniqueness-active} to get the following result:

\begin{proposition}[Activation of production bounds: Active pollution case] \label{prop:technology-activation-active-pollution}
	Let \((\productionvector, \flowvector)\) be a solution to \eqISO for \(\bidvector\) satisfying the hypotheses for Proposition \ref{prop:production-uniqueness-active}. Assume \((\productionvector, \flowvector)\) activated the pollution constraint. Then,
	\begin{enumerate}
		\item All nodes have at most two technologies not activating any of their bounds.
		\item If there is a node that has two technologies not activating any of their bounds, then that node is unique.
	\end{enumerate}
\end{proposition}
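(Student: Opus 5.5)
We argue from the KKT conditions of \eqPF and the genericity hypotheses of Proposition \ref{prop:production-uniqueness-active}, in the same way as in its proof. Since \(\productionvector\) solves \eqPF, there are multipliers \(\pricevector, \tax, \kktcapacityuppervector, \kktcapacitylowervector \geq 0\) with \(\isocost + \tax \pollutionfactor - \price + \kktcapacityupper - \kktcapacitylower = 0\) for all \(\node, \tech\). Call \((\node, \tech)\) \emph{interior} if \(\production \in (0, \maximumcapacity)\). For an interior pair, complementary slackness \eqref{CS-capacity-lower}--\eqref{CS-capacity-upper} gives \(\kktcapacitylower = \kktcapacityupper = 0\), and therefore
\[
\price - \tax \pollutionfactor = \isocost .
\]
So every interior pair at node \(\node\) gives a point \((\pollutionfactor, \isocost)\) lying on the line \(z \mapsto \price - \tax z\). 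The whole proof consists of counting how many such points the hypotheses allow on one line, and on several lines sharing the same slope \(-\tax\).

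For item 1, suppose node \(\node\) had three interior technologies \(\tech_1, \tech_2, \tech\). The first two give the linear system of hypothesis 2 with \((a, b) = (\price, \tax)\). This system is uniquely solvable because the pollution factors at a node are pairwise distinct, so \((a^*, b^*) = (\price, \tax)\). The third interior pair then gives \(\isocost = a^* - \pollutionfactor b^*\), which contradicts hypothesis 2. Hence each node has at most two interior technologies.

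For item 2, suppose two distinct nodes \(\node_1, \node_2\) each have two interior technologies, \((\tech_1, \tech_2)\) and \((\tech_3, \tech_4)\) respectively. Subtracting the two interior equations at \(\node_1\) eliminates \(\price[\node_1]\) and gives
\[
\tax = \frac{\isocost[\node_1][\tech_1] - \isocost[\node_1][\tech_2]}{\pollutionfactor[\node_1][\tech_2] - \pollutionfactor[\node_1][\tech_1]} .
\]
The same computation at \(\node_2\) expresses the same \(\tax\) by the corresponding ratio. Equating the two expressions contradicts hypothesis 3.

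The key step, and the only delicate point, is that the multiplier \(\tax\) is shared by all nodes because the pollution constraint is global. The argument needs just one KKT multiplier vector for \eqPF, not uniqueness of multipliers. I would also check that the multipliers of \eqPF can be taken equal to those of \eqKKT, since the constraints coincide once the flow is fixed. Strictly, though, any KKT multiplier vector of the LP \eqPF suffices, and one exists by LP duality. The hypothesis that the pollution constraint is active is used only to frame the statement; the argument itself does not need \(\tax > 0\).
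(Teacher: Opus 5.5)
Your proof is correct and follows the same route as the paper. The paper's one-line proof evaluates the stationarity condition \eqref{KKT-nodes} at the pairs strictly between their bounds, and notes that a third interior technology at one node, or two nodes each with two interior technologies, would force incompatible values of \(\tax\), which hypotheses 2 and 3 rule out. You make explicit what the paper leaves implicit: complementary slackness gives \(\kktcapacitylower = \kktcapacityupper = 0\), the \(2\times 2\) system is uniquely solvable because the pollution factors are distinct, and a single multiplier vector suffices.
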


\begin{proof}
	If any of the two properties did not hold, we may evaluate the KKT conditions \eqref{KKT-nodes} to get incompatible values of the multiplier \(\tax\). This yields a contradiction since these values are assumed to be distinct. \qed
\end{proof}

\section{Solution approach} \label{sect:solution-approach}

In what follows, we define \(\isovector = (\productionvector, \flowvector, \pricevector, \tax, \kktcapacitylowervector, \kktcapacityuppervector)\) as the full ISO response to the agents' bids \(\bidvector\) and group the collection of equalities in \eqKKT (that is, \eqref{KKT-nodes}, \eqref{KKT-edges}, \eqref{CS-balance}, \eqref{CS-pollution}, \eqref{CS-capacity-lower}, \eqref{CS-capacity-upper}) as a single vector equation in the form \(H(\bidvector, \isovector) = 0\) as well as the dual feasibility constraints (that is, \eqref{ISO:balance}, \eqref{ISO:pollution}, \eqref{ISO:capacity} and \(\pricevector, \tax, \kktcapacitylowervector, \kktcapacityuppervector \geq 0\)) into a vector inequality which we rearrange as \(G(\isovector) \leq 0\). With this notation and by Slater's condition, we know that \(\isovector\) is a response to \(\bidvector\) if and only if \(H(\bidvector, \isovector) = 0, G(\isovector) \leq 0\). Thus, the market equilibrium problem can be written as:
\begin{subequations}
	\begin{empheq}[left=\text{GNEP} : \empheqlbrace]{align*}
    &\text{Find } \qty(\bidvector_1^*, \dots, \bidvector_N^*, \isovector^*) \text{ satisfying } \\
    &\begin{aligned}
	    \quad \forall \node : (\bidvector_\node, \isovector^*) \in 
	    \argmin_{\bidvector_\node, \isovector} \quad &\agentcost(\isovector) \\
	    \text{s.t.} \quad
							 &\bidvector_\node \in \bidspace_\node, \\
							 &H(\bidvector_\node, \bidvector_{-\node}^*, \isovector) = 0, \notag \\
							 &G(\isovector) \leq 0.
    \end{aligned}
	\end{empheq} \label{eq:equilibrium-problem-gnep}
\end{subequations}

This is a Generalized Nash Equilibrium Problem (GNEP) with \(\isovector\) acting as a shared variable among all agents (see \cite{facchinei2010generalized}). According to the previous Theorems \ref{thm:ISO-primal-uniqueness} and \ref{thm:ISO-dual-uniqueness} as well as Remark \ref{rmk:tie-breaking}, the ISO response mapping \(\bidvector \mapsto \isovector(\bidvector)\) is well defined. Under stronger assumptions, the Fiacco \& McCormick theorem (see Theorem 3.1 on \cite{fiacco2006sensitivity}) may be used in order to conclude local differentiability of this response mapping. This, along with using the Implicit Function Theorem to get an explicit form of \(\grad{y}(\bidvector)\), opens the possibility of first-order numerical approaches based on descent techniques that could yield convergence to Nash equilibria, this remains a promising topic for a future work. As a first approach for now, our numerical experiments are done by attempting to solve a KKT reformulation of problem \eqGNEP directly.

To compute equilibria, we use the \texttt{GAMS} modelling environment\footnote{Code for reproducing the experiments in this paper is available at the following repository: \href{https://github.com/Benja-Vera/strategic-pricing-pollution-constraints}{https://github.com/Benja-Vera/strategic-pricing-pollution-constraints}}, which currently provides the only implementation of the Extended Mathematical Programming (EMP) framework. However, equilibrium problems with bilevel structure such as the one considered in this work are not natively supported by EMP. To address this, we represent the system \eqKKT as not a mixed complementarity problem but via its nonlinear product reformulation. This results in a GNEP in which one copy of the ISO solution and multipliers \(\isovector_\node\) is assigned to every agent as a decision variable constrained by \eqKKT. However, in order to mitigate the number of variables, the \textit{switching} strategy discussed in \cite{FERRIS2009EMP} is employed. This modifies the resulting MCP so that the shared ISO variables are not replicated. Possible discrepancies between agents are eliminated by theorems \ref{thm:ISO-primal-uniqueness} and \ref{thm:ISO-dual-uniqueness}.

Because of non-convexity, not all MCP solutions are true Nash equilibria. Therefore, we search from a variety of initial guesses. Resulting MCP solutions are then filtered by an \textit{agent optimality test}: for each agent \(\node\), we solve an NLP corresponding to agent \(n\)'s best response according to problem \eqGNEP. If no agent has a profitable unilateral deviation beyond a numerical tolerance \(\epsilon_{\text{AO}}\), the candidate is accepted as a Nash Equilibrium. The search was carried out using a grid-based strategy. Each bid variable was discretized into \(K\) points, yielding \(K^{\nodenumber \technumber}\) combinations. Each bid configuration was paired with its corresponding ISO response \(\isovector(\bidvector)\) and used as an initial guess for solving the MCP.

The underlying solver for the MCP reformulation built via the EMP framework was \texttt{KNITRO} \cite{Byrd2006Knitro}. Although preliminary tests were conducted using \texttt{PATH}, a solver widely applied in complementarity problems (see \cite{Ferris1995PATH}), its performance in finding solutions to the MCP in this context was found to be insufficient. Similar limitations have also been reported in related literature, such as \cite{bautista2007formulation}. \texttt{KNITRO} was also employed to solve the agents' best response problems during the agent optimality test. For the ISO's dispatch problem \eqISO, associated with a given bid vector \(\bidvector\), the \texttt{GUROBI} solver was used, as it is well suited for handling the convex quadratic structure of the ISO's problem.

The sensitivity analysis involved repeated calls to the grid search routine, each with slight changes to parameters such as the pollution limit \(\pollutionlimit\) or pollution costs \(\pollutioncost\). To reduce the computational burden, two implementation strategies were used. First, a \textit{prior searching mode} reused the last verified equilibrium as a starting point in the next MCP solve, helping track equilibrium shifts across small parameter changes. Second, a \textit{unique solution mode} allowed early termination of the search once a single valid equilibrium was found, significantly reducing computation time, particularly in large instances.

\section{Numerical results} \label{sect:examples}

The numerical results to be presented rely on four network instances which are introduced in increasing order of complexity in subsection \ref{sub:instances}. In \ref{sub:sensitivity-analysis}, we then present the results of the numerical sensitivity experiments. We conclude the section by making a summary of the computational performance of the grid searching algorithm on the presented experiments.

\subsection{Problem instances} \label{sub:instances}

The first network, depicted in Figure \ref{fig:instance-1-asymmetric}, is adapted from a setting originally studied in \cite{escobar2010monopolistic}, where a closed form expression for the market equilibrium of a symmetric network was derived. After validating our numerical procedure in the symmetric case, we modify the instance by framing it as competition between neighbouring nodes, each with a different generating portfolio. In this instance we only study the effect of the pollution cost mechanism and correspondingly, we set the limit \(\pollutionlimit\) to a sufficiently high value.

\begin{figure}[htbp]
	\centering
	\begin{tikzpicture}[node distance=4cm]

		\node[draw, circle, align=center] (node0) at (0,0) {
				$\boldsymbol{0}$ \\
				$\demand[0] = 10$, \\
				$\maximumcapacityvector_0 = 30$, \\
				$\truecostvector_0 = 1$, \\
				$\pollutionfactorvector_0 = 2$
			};

		\node[draw, circle, align=center, right=of node0] (node1) {
				$\boldsymbol{1}$ \\
				$\demand[1] = 10$, \\
				$\maximumcapacityvector_1 = 30$, \\
				$\truecostvector_1 = 1.7$, \\
				$\pollutionfactorvector_1 = 0.7$
			};

		\draw[->, thick] (node0.east) -- (node1.west) 
			node[midway, above, align=center] {$\resistanceletter = 0.025$};

	\end{tikzpicture}
	\caption{Two-node one-technology network. Here, node \(0\) acts as a generator possessing a cheap but highly polluting energy source, whereas node \(1\)'s technology is significantly less polluting but has a higher production cost. For both nodes, it is feasible to satisfy the total network demand.}
	\label{fig:instance-1-asymmetric}
\end{figure}

As it provides only a single generation technology, Instance \ref{fig:instance-1-asymmetric} is not well suited to our setting, which requires agents to make strategic choices between multiple technologies. To address this, we introduce a new technology for every agent, thus extending into the network depicted in Figure \ref{fig:instance-2}.
\begin{figure}[htbp]
	\centering
	\begin{tikzpicture}[node distance=4cm]
		\node[draw, circle, align=center] (node0) at (0,0) {
				$\boldsymbol{0}$ \\
				$\demand[0] = 10$, \\
				$\maximumcapacityvector_0 = (30, 30)$, \\
				$\truecostvector_0 = (1.5, 1)$, \\
				$\pollutionfactorvector_0 = (1, 2)$
			};

		\node[draw, circle, align=center, right=of node0] (node1) {
				$\boldsymbol{1}$ \\
				$\demand[1] = 10$, \\
				$\maximumcapacityvector_1 = (30, 30)$, \\
				$\truecostvector_1 = (1.5, 1)$, \\
				$\pollutionfactorvector_1 = (1, 2)$
			};

		\draw[->, thick] (node0.east) -- (node1.west) 
			node[midway, above, align=center] {$\resistanceletter = 0.025$};
	\end{tikzpicture}
	\caption{Two-node two-technology network. The trade-off presented in Instance \ref{fig:instance-1-asymmetric} is now internal to each node. As before, the capacities enable each node to fulfill the complete network demand on its own if required.}
	\label{fig:instance-2}
\end{figure}

Here, every agent has two equal options: A clean but expensive technology in \(\tech = 0\), and a cheaper but more polluting option in \(\tech = 1\). This symmetry is expected to induce ill-conditioning of the resulting market solutions as it results in symmetric bids at equilibrium, which may induce non-uniqueness of the ISO response. Thus, we compare the results of Instance \ref{fig:instance-2} with a slight perturbation which is depicted in Figure \ref{fig:instance-2-asymmetric}.

\begin{figure}[htbp]
	\centering
	\begin{tikzpicture}[node distance=4cm]
		\node[draw, circle, align=center] (node0) at (0,0) {
				$\boldsymbol{0}$ \\
				$\demand[0] = 10$, \\
				$\maximumcapacityvector_0 = (30, 30)$, \\
				$\truecostvector_0 = (1.55, 0.95)$, \\
				$\pollutionfactorvector_0 = (0.95, 2.1)$
			};

		\node[draw, circle, align=center, right=of node0] (node1) {
				$\boldsymbol{1}$ \\
				$\demand[1] = 10$, \\
				$\maximumcapacityvector_1 = (30, 30)$, \\
				$\truecostvector_1 = (1.5, 1)$, \\
				$\pollutionfactorvector_1 = (1, 2)$
			};

		\draw[->, thick] (node0.east) -- (node1.west) 
			node[midway, above, align=center] {$\resistanceletter = 0.025$};
	\end{tikzpicture}
	\caption{Asymmetric version of Instance \ref{fig:instance-2}. Only values corresponding to pollution and production cost were changed.}
	\label{fig:instance-2-asymmetric}
\end{figure}

The final example aims to test our approach on a multi-nodal network with asymmetric production capacities where agent interactions are more complex than in the previously studied two-node settings. The network configuration depicted in Figure \ref{fig:network-and-table} is inspired from the Chilean electricity market, following the setup in \cite{hernandez2022pollutionregulation}. Key characteristics of this instance are summarized in the given table. Unlike the earlier examples, this network does not satisfy a simple \(n-1\) robustness assumption: if production at the central node was unavailable, the northern and southern regions could not meet their own demand and that of the center. This structural vulnerability is expected to have an influence on equilibrium outcomes.

\begin{figure}[htbp]
	\centering
	\begin{minipage}[c]{0.48\textwidth}
		\centering
		\begin{tikzpicture}[node distance=4cm]
			\node[draw, circle, align=center] (node0) at (0,3) {
					$\boldsymbol{0}$ \textbf{(North)} \\
					$\demand[0] = 3000$
				};

			\node[draw, circle, align=center] (node1) at (0,0) {
					$\boldsymbol{1}$ \textbf{(South)} \\
					$\demand[1] = 1000$
				};

			\node[draw, circle, align=center] (node2) at (3,1.5) {
					$\boldsymbol{2}$ \textbf{(Center)} \\
					$\demand[2] = 6000$
				};

			\draw[->, thick] (node0.south) -- (node1.north)
				node[midway, left, align=center] {$\resistanceletter = 10^{-5}$};

			\draw[->, thick] (node0.east) -- (node2.north west)
				node[midway, above right, align=center] {$\resistanceletter = 10^{-5}$};

			\draw[->, thick] (node2.south west) -- (node1.east)
				node[midway, below right, align=center] {$\resistanceletter = 10^{-5}$};
		\end{tikzpicture}
	\end{minipage}%
	\hfill
	\begin{minipage}[c]{0.48\textwidth}
		\centering
		\scriptsize
		\begin{tabular}{|c|c|c|c|c|}
			\hline
			\textbf{Node} & \textbf{Tech} & \textbf{Cap.} & \textbf{Cost} & \textbf{Poll.} \\
			\hline
			\multirow{3}{*}{0 (N)} & Solar & 1800 & 0 & 0 \\
					       & Coal  & 1800 & 40 & 1 \\
					       & Gas   & 2400 & 80 & 0.5 \\
					       \hline
			\multirow{3}{*}{2 (C)} & Solar & 2400 & 0 & 0 \\
					       & Coal  & 1200 & 40 & 1 \\
					       & Gas   & 8400 & 80 & 0.5 \\
					       \hline
			\multirow{3}{*}{1 (S)} & Solar & 200  & 0 & 0 \\
					       & Coal  & 1800 & 40 & 1 \\
					       & Gas   & 1000 & 80 & 0.5 \\
					       \hline
		\end{tabular}
	\end{minipage}
	\caption{Three-node network with associated generation technologies and characteristics for the Chilean electricity market instance. In this network, the generation capacity of the center is not replaceable, as the south and the north cannot satisfy the network demand without it.}
	\label{fig:network-and-table}
\end{figure}

\subsection{Numerical sensitivity analysis} \label{sub:sensitivity-analysis}

We now analyze how market equilibria respond to variations in the ISO's pollution control measures specifically tightening the pollution limit \(\pollutionlimit\) or increasing the pollution cost \(\pollutioncost\). Since these measures are meant to be mutually exclusive in an applied setting, whenever we test the effect of pollution costs, the limit is set high enough to never be active during the experiment. Correspondingly, when the pollution limit is being tested, the costs \(\pollutioncost\) are all set to \(0\). The goal is to assess the effects of these mechanisms on flows, dispatch, emissions, and costs for the ISO and market agents. In reporting ISO costs, we focus on the \textit{effective market cost} under regulation, computed as
\[-\qty(\sum_{\node} \agentcost(\bidvector^*) - \sum_{\node, \tech} \truecost \production) = \sum_{\node, \tech} (\price - \pollutionfactor(\pollutioncost + \tax))\production,\]
which reflects the price paid adjusted by pollution penalties.

\subsubsection{Flow dynamics on instance \ref{fig:instance-1-asymmetric}}

Figure \ref{fig:pollution-cost-SA-instance-1} illustrates the effect of increasing the pollution cost from \(0\) to \(1\) on the equilibrium flow balance. As the cost rises, production gradually shifts from agent 0 to agent 1. Agent 0, initially a net exporter, becomes a net importer, while agent 1 takes over a larger share of the supply. This is accompanied by a rise in both agents' equilibrium bids, which influences their profits.

Recall that, using \ref{KKT-nodes} and the fact that the pollution constraint is inactive, the utilities of the agents in this one-node case reduce to the pay-as-bid formula: $u_\node(\bidletter_\node, \productionletter_\node) = (\bidletter_\node - \hat{\bidletter}_\node)\productionletter_\node$. As shown in Figure \ref{fig:pollution-cost-SA-instance-1} the profit of agent \(\node = 0\) remains largely unchanged: although its production decreases, the higher selling price compensates for the loss. Agent \(\node = 1\), on the other hand, benefits the most, producing more energy and selling it at a higher price. As a result, the ISO's total network operation cost increases.

The effect exhibits \textit{diminishing returns}: although the operation cost rises linearly, the rate of change in both energy mix and overall pollution reduction slows. This dynamic suggests limits to the efficiency of this cost-based pollution control mechanism.

\begin{figure}[htbp]
	\centering
	\begin{tikzpicture}
		\begin{groupplot}[
			group style={
				group size=3 by 2,
				horizontal sep=0.5cm,
				vertical sep=1.4cm,
			},
			title style={yshift=-1.3ex},
			xlabel style={yshift=1.2ex},
			width=5cm,
			height=3.6cm,
			xlabel={},
			grid=major,
			every axis plot/.append style={mark=*, mark size=1.5pt}
			]

			\pgfplotstableread[col sep = comma]{instance-1-pollution_cost-SA.csv}\instanceoneflow

			\nextgroupplot[
				title=Network energy mix,
				legend style={at={(0.5, -0.45)}, anchor=south, legend columns=2, font=\footnotesize},
				legend cell align={left},
				stack plots = y,
				ymin=0
			]
			\addplot[
				fill=RedOrange,
				mark options = {draw=black},
				]
				table [col sep=comma, x=pollution_cost, y=production_node0_tech0]
				{\instanceoneflow}\closedcycle;
			\addlegendentry{node 0}
			\addplot[
				fill=RoyalBlue,
				mark options = {draw=black},
				mark=square*
				]
				table [col sep=comma, x=pollution_cost, y=production_node1_tech0]
				{\instanceoneflow}\closedcycle;
			\addlegendentry{node 1}

			\nextgroupplot[
				title=Flow \(0 \to 1\),
				only marks
			]
			\addplot[
				color=Black,
				mark options = {draw=black}
				]
				table [col sep=comma, x=pollution_cost, y=flow_0to1]
				{\instanceoneflow};

			\nextgroupplot[
				title=Pollution by node,
				legend style={at={(0.5, -0.45)}, anchor=south, legend columns=2, font=\footnotesize},
				legend cell align={left},
				stack plots = y,
				ymin=0
			]
			\addplot[
				fill=RedOrange,
				mark options = {draw=black},
				]
				table [
				col sep=comma,
				x=pollution_cost,
				y=pollution0
				]
				{\instanceoneflow}\closedcycle;
			\addlegendentry{node 0}
			\addplot[
				fill=RoyalBlue,
				mark options = {draw=black},
				mark=square*
				]
				table [
				col sep=comma,
				x=pollution_cost,
				y=pollution1
				]
				{\instanceoneflow}\closedcycle;
			\addlegendentry{node 1}

			\nextgroupplot[
				title=Agent utilities,
				only marks,
				legend style={
					at={(0.5, -0.45)},
					anchor=south, 
					legend columns=2,
					font=\footnotesize
				},
				legend cell align={left},
				ymin=0
			]
			\addplot[
				color=RedOrange,
				mark options = {draw=black}
				]
				table [col sep=comma, x=pollution_cost, y=agent_utility0]
				{\instanceoneflow};
			\addlegendentry{node 0}
			\addplot[
				color=RoyalBlue,
				mark options = {draw=black},
				mark = square*
				]
				table [col sep=comma, x=pollution_cost, y=agent_utility1]
				{\instanceoneflow};
			\addlegendentry{node 1}

			\nextgroupplot[
				title=Bids,
				only marks,
				legend style={
					at={(0.5, -0.45)},
					anchor=south, 
					legend columns=2,
					font=\footnotesize
				},
				legend cell align={left},
			]
			\addplot[
				color=RedOrange,
				mark options = {draw=black}
				]
				table [col sep=comma, x=pollution_cost, y=bid_node0_tech0]
				{\instanceoneflow};
			\addlegendentry{node 0}
			\addplot[
				color=RoyalBlue,
				mark options = {draw=black},
				mark = square*
				]
				table [col sep=comma, x=pollution_cost, y=bid_node1_tech0]{\instanceoneflow};
			\addlegendentry{node 1}

			\nextgroupplot[
				title=Operation cost,
				only marks,
				ymin=0
			]
			\addplot[
				color=Black,
				mark options = {draw=black}
				]
				table [col sep=comma, x=pollution_cost, y=operation_cost]{\instanceoneflow};

		\end{groupplot}
	\end{tikzpicture}
	\caption{Pollution cost sensitivity analysis on Instance \ref{fig:instance-1-asymmetric}. In all plots, the \(x\) axis represents pollution cost and dots depict corresponding variables at a computed market equilibrium. Here and in the plots that follow, energy mix and pollution plots are stacked.}
	\label{fig:pollution-cost-SA-instance-1}
\end{figure}

The remaining experiments are based on instances where the available technologies are symmetrically distributed across nodes. That is, each node is equipped with the same set of production technologies -—e.g., gas, solar, wind, or oil—- with identical or nearly identical cost and pollution parameters. This is a realistic assumption for national-scale models, where each node represents a region rather than an individual plant. While the current computational framework cannot yet scale to more granular representations (e.g., city-level or plant-level modeling), this experiment serves as a small-scale demonstration of dynamics that would emerge in such settings if regional technological asymmetries were introduced.

\subsubsection{Pollution cost on instance \ref{fig:instance-2}}

In Figure \ref{fig:pollution-cost-SA-instance-2}, we raise the cost associated with the most polluting technology in Instance \ref{fig:instance-2} from 0 to 0.4 and observe the market's reaction.

\begin{figure}[htbp]
	\centering
	\begin{tikzpicture}
		\begin{groupplot}[
			group style={
				group size=3 by 2,
				horizontal sep=0.74cm,
				vertical sep=1.45cm,
			},
			title style={yshift=-1.3ex},
			xlabel style={yshift=1.2ex},
			width=4.6cm,
			height=3.6cm,
			xlabel={},
			grid=major,
			every axis plot/.append style={mark=*, mark size=1.5pt}
			]

			\pgfplotstableread[col sep = comma]{instance-2-pollution_cost-SA.csv}\instancetwocost

			\nextgroupplot[
				title=Energy mix agent 0,
				legend style={at={(0.5, -0.45)}, anchor=south, legend columns=2, font=\footnotesize},
				legend cell align={left},
				stack plots=y
				]
			\addplot[
				fill=SkyBlue,
				mark options = {draw=black},
				]
				table [col sep=comma, x=pollution_cost, y=production_node0_tech0]
				{\instancetwocost}\closedcycle;
			\addlegendentry{tech 0}
			\addplot[
				fill=Maroon,
				mark options = {draw=black},
				mark = square*
				]
				table [col sep=comma, x=pollution_cost, y=production_node0_tech1]
				{\instancetwocost}\closedcycle;
			\addlegendentry{tech 1}

			\nextgroupplot[
				title=Energy mix agent 1,
				legend style={at={(0.5, -0.45)}, anchor=south, legend columns=2, font=\footnotesize},
				legend cell align={left},
				stack plots = y
				]
			\addplot[
				fill=SkyBlue,
				mark options = {draw=black},
				]
				table [col sep=comma, x=pollution_cost, y=production_node1_tech0]
				{\instancetwocost}\closedcycle;
			\addlegendentry{tech 0}
			\addplot[
				fill=Maroon,
				mark options = {draw=black},
				mark = square*
				]
				table [col sep=comma, x=pollution_cost, y=production_node1_tech1]
				{\instancetwocost}\closedcycle;
			\addlegendentry{tech 1}

			\nextgroupplot[
				title=Flow \(0 \rightarrow 1\),
				only marks,
				ymin=-1,
				ymax=1
			]
			\addplot[color=Black]
				table [
				col sep=comma,
				x=pollution_cost,
				y={flow_0to1},
				]
				{\instancetwocost};

			\nextgroupplot[
				title=Pollution by node,
				legend style={
					at={(0.5, -0.45)},
					anchor=south,
					font=\footnotesize,
					legend columns=2
				},
				legend cell align={left},
				stack plots = y,
				ymin=0
				]
			\addplot[
				fill=RoyalBlue,
				mark options = {draw=black},
				]
				table [col sep=comma, x=pollution_cost, y=pollution0]
				{\instancetwocost}\closedcycle;
			\addlegendentry{node 0}
			\addplot[
				fill=RedOrange,
				mark options = {draw=black},
				mark = square*
				]
				table [col sep=comma, x=pollution_cost, y=pollution1]
				{\instancetwocost}\closedcycle;
			\addlegendentry{node 1}

			\nextgroupplot[
				title=Agent utilities,
				only marks,
				legend style={
					at={(0.5, -0.45)},
					anchor=south,
					font=\footnotesize,
					legend columns=2
				},
				legend cell align={left},
				ymin=0
				]
			\addplot[mark=x, color=RoyalBlue]
				table [
				col sep=comma,
				x=pollution_cost,
				y=agent_utility0
				]
				{\instancetwocost};
			\addlegendentry{node 0}
			\addplot[mark=+, color=RedOrange]
				table [
				col sep=comma,
				x=pollution_cost,
				y=agent_utility1
				]
				{\instancetwocost};
			\addlegendentry{node 1}

			\nextgroupplot[
			title=Operation cost,
			only marks,
			ymin=0
			]
			\addplot[color=Black]
				table [
				col sep=comma,
				x=pollution_cost,
				y=operation_cost
				]
				{\instancetwocost};

		\end{groupplot}
	\end{tikzpicture}
	\caption{Pollution cost sensitivity analysis on Instance \ref{fig:instance-2}. In all plots, the \(x\) axis represents the pollution cost \(\pollutioncost[1]\) associated to \(\tech = 1\).}
	\label{fig:pollution-cost-SA-instance-2}
\end{figure}

The energy flow between the nodes is equal to zero indicating that across all equilibria, each agent produces locally to fully satisfy its own demand (10 units), resulting in no transmission between nodes. However, due to the presence of multiple technologies, the internal transition between them, occurring at around \(\pollutioncost[1] = 0.25\), is abrupt rather than gradual. This is expected as there is no \textit{friction} or gradual substitution cost between different technologies within the same agent, in contrast to inter-agent interactions.

Interestingly, even though the production mix jumps on a clearly defined breakpoint, the agent costs remain continuous functions of \(\pollutioncost[1]\). Utilities, driven by nodal prices, increase linearly with pollution cost, leading to higher profitability for the participating agents, yet the production portfolio remains unchanged until prices reach a threshold that triggers a full technology switch. A potential direction for future research would be to formally characterize the conditions under which this continuity property holds. As such insights could allow analytical computation of these breakpoints on more complex market instances.

\subsubsection{Pollution limit on instance \ref{fig:instance-2}}

We now evaluate, still on Instance \ref{fig:instance-2}, the effect of the second available regulatory mechanism: the pollution limit. In Figure \ref{fig:pollution-limit-SA-instance-2-symmetric}, the pollution limit \(\pollutionlimit\) is gradually decreased from its initial value of 42, which imposes no binding constraint on the equilibrium, down to more restrictive levels that compel the substitution of polluting technologies with cleaner ones. While variability can be observed in the agents' energy mix plots, the overall trend is clear: the use of technology \(\tech = 1\) decreases in favor of the cleaner alternative \(\tech = 0\) as the constraint tightens. As previously mentioned, this variability stems from the symmetry of the network, which allows non-uniqueness of the ISO solution for the equilibrium bids.

\begin{figure}[htbp]
	\centering
	\begin{tikzpicture}
		\begin{groupplot}[
			group style={
				group size=3 by 3,
				horizontal sep=0.5cm,
				vertical sep=1.5cm,
			},
			title style={yshift=-1.3ex}, xlabel style={yshift=1.2ex},
			width=4.8cm,
			height=3.6cm,
			xlabel={},
			grid=major,
			every axis plot/.append style={mark=*, mark size=1.5pt}
			]

			\pgfplotstableread[col sep = comma]{instance-2-pollution_limit-SA-symmetric.csv}\instancetwosymmetric

			\nextgroupplot[
				title=Energy mix agent 0,
				legend style={
					at={(0.5, -0.45)},
					anchor=south,
					font=\footnotesize,
					legend columns=2
				},
				legend cell align={left},
				stack plots = y
				]
			\addplot[
				fill=SkyBlue,
				mark options = {draw=black}
				]
				table [col sep=comma, x=pollution_limit, y=production_node0_tech0]
				{\instancetwosymmetric}\closedcycle;
			\addlegendentry{tech 0}
			\addplot[
				fill=Maroon,
				mark options = {draw=black},
				mark = square*
				]
				table [col sep=comma, x=pollution_limit, y=production_node0_tech1]
				{\instancetwosymmetric}\closedcycle;
			\addlegendentry{tech 1}

			\nextgroupplot[
				title=Energy mix agent 1,
				legend style={
					at={(0.5, -0.45)},
					anchor=south,
					font=\footnotesize,
					legend columns=2
				},
				legend cell align={left},
				stack plots = y
				]
			\addplot[
				fill=SkyBlue,
				mark options = {draw=black}
				]
				table [col sep=comma, x=pollution_limit, y=production_node1_tech0]
				{\instancetwosymmetric}\closedcycle;
			\addlegendentry{tech 0}
			\addplot[
				fill=Maroon,
				mark options = {draw=black},
				mark = square*
				]
				table [col sep=comma, x=pollution_limit, y=production_node1_tech1]
				{\instancetwosymmetric}\closedcycle;
			\addlegendentry{tech 1}

			\nextgroupplot[
				title=Pollution by node,
				legend style={
					at={(0.5, -0.45)},
					anchor=south,
					font=\footnotesize,
					legend columns=2
				},
				legend cell align={left},
				stack plots = y,
				ymin=0
				]
			\addplot[
				fill=RoyalBlue,
				mark options = {draw=black},
				]
				table [col sep=comma, x=pollution_limit, y=pollution0]
				{\instancetwosymmetric}\closedcycle;
			\addlegendentry{node 0}
			\addplot[
				fill=RedOrange,
				mark options = {draw=black},
				mark = square*
				]
				table [col sep=comma, x=pollution_limit, y=pollution1]
				{\instancetwosymmetric}\closedcycle;
			\addlegendentry{node 1}

			\nextgroupplot[
				title=Utility of agent 0,
				legend style={
					at={(0.5, -0.45)},
					anchor=south,
					font=\footnotesize,
					legend columns=2
				},
				legend cell align={left},
				stack plots = y,
				ymin=0
				]
			\addplot[
				fill=RoyalBlue,
				mark options = {draw=black},
				]
				table [col sep=comma, x=pollution_limit, y=agent_utility0]
				{\instancetwosymmetric}\closedcycle;
			\addlegendentry{net utility}
			\addplot[
				fill=Gray,
				mark options = {draw=black},
				mark = square*
				]
				table [col sep=comma, x=pollution_limit, y=agent_tax0]
				{\instancetwosymmetric}\closedcycle;
			\addlegendentry{tax}

			\nextgroupplot[
				title=Utility of agent 1,
				legend style={
					at={(0.5, -0.45)},
					anchor=south,
					font=\footnotesize,
					legend columns=2
				},
				legend cell align={left},
				stack plots = y,
				ymin=0
				]
			\addplot[
				fill=RedOrange,
				mark options = {draw=black},
				]
				table [col sep=comma, x=pollution_limit, y=agent_utility1]
				{\instancetwosymmetric}\closedcycle;
			\addlegendentry{net utility}
			\addplot[
				fill=Gray,
				mark options = {draw=black},
				mark = square*
				]
				table [col sep=comma, x=pollution_limit, y=agent_tax1]
				{\instancetwosymmetric}\closedcycle;
			\addlegendentry{tax}

			\nextgroupplot[
				title=Operation cost,
				legend style={
					at={(0.5, -0.45)},
					anchor=south,
					font=\footnotesize,
					legend columns=2
				},
				legend cell align={left},
				stack plots = y,
				ymin=0
				]
			\addplot[
				fill=Black,
				mark options = {draw=black},
				]
				table [col sep=comma, x=pollution_limit, y=operation_cost]
				{\instancetwosymmetric}\closedcycle;
			\addlegendentry{net cost}
			\addplot[
				fill=Gray,
				mark options = {draw=black},
				mark = square*
				]
				table [
				col sep=comma,
				x=pollution_limit,
				y=total_tax
				]
				{\instancetwosymmetric}\closedcycle;
			\addlegendentry{tax}

			\nextgroupplot[
				title=Flow \(0 \rightarrow 1\),
				only marks,
				ymin=-1,
				ymax=1
				]
			\addplot[color=Black]
				table [
				col sep=comma,
				x=pollution_limit,
				y=flow_0to1
				]
				{\instancetwosymmetric};

			\nextgroupplot[
				title=Energy prices,
				only marks,
				legend style={
					at={(0.5, -0.45)},
					anchor=south,
					legend columns=2,
					font=\footnotesize
				},
				legend cell align={left},
				ymin=0
				]
			\addplot[mark=x, color=RoyalBlue]
				table [col sep=comma, x=pollution_limit, y=price0]
				{\instancetwosymmetric};
			\addlegendentry{node 0}
			\addplot[mark=+, color=RedOrange]
				table [col sep=comma, x=pollution_limit, y=price1]
				{\instancetwosymmetric};
			\addlegendentry{node 1}

			\nextgroupplot[
			title=Pollution tax,
			only marks
			]
			\addplot[color=Black]
				table [
				col sep=comma,
				x=pollution_limit,
				y=tax
				]
				{\instancetwosymmetric};

		\end{groupplot}
	\end{tikzpicture}
	\caption{Pollution limit sensitivity analysis on Instance \ref{fig:instance-2}. In all plots, the \(x\) axis represents the pollution limit \(\pollutionlimit\). Missing points correspond to unsuccessful grid search runs.}
	\label{fig:pollution-limit-SA-instance-2-symmetric}
\end{figure}

Similarly to the previous cases, the introduction of a binding pollution limit benefits the agents by increasing their profit margins. The shift in the energy mix, potentially disadvantageous due to changes in production costs, is offset by a reduction in the pollution tax burden. This tax, computed as \(\tax \cdot \sum_{\tech} \pollutionfactor \production\), represents the total penalty associated with emissions at equilibrium. The shaded gray region over the agent utility depicted in Figure \ref{fig:pollution-limit-SA-instance-2-symmetric}, illustrates the utility that would have been determined in the absence of pollution taxation.

An important consequence of this regulatory mechanism is its disruptive effect on the continuity of prices, which remained intact under the pollution cost mechanism. As shown in the price and agent cost plots, prices exhibit a discontinuous jump precisely at the point where the pollution constraint becomes active. After this jump, prices remain flat, regardless of further tightening of the limit. Meanwhile, the ISO's monetary cost continues to rise as the pollution limit is lowered. This behavior can be explained by the decrease in the total collected tax $\tax \cdot \sum_{\node, \tech} \pollutionfactor \production$, which is plotted in gray over the ISO's net cost. As network pollution declines, the total tax diminishes. Leading to a greater net cost for the ISO to operate the market.

Repeating the same experiment on the asymmetrical Instance \ref{fig:instance-2-asymmetric} leads to the results shown in Figure \ref{fig:pollution-limit-SA-instance-2-asymmetric}. The ISO dispatch solution at the computed equilibria is unique. However, we observe small variations in price-related quantities such as nodal prices and agent costs. These variations become more pronounced when the asymmetry is intensified or when nonzero pollution costs are introduced on top of an already active pollution limit.

This variability is linked to the existence of multiple equilibria for each value of \(\pollutionlimit\), as the grid search routine is configured to stop after finding the first equilibrium. When this restriction is relaxed, multiple verified equilibria can indeed be found, leading to the appearance of intervals of possible equilibrium values for nodal prices and other quantities under a fixed pollution constraint.
\begin{figure}[htbp]
	\begin{tikzpicture}
		\begin{groupplot}[
			group style={
				group size=3 by 3,
				horizontal sep=0.5cm,
				vertical sep=1.5cm,
			},
			title style={yshift=-1.3ex}, xlabel style={yshift=1.2ex},
			width=4.8cm,
			height=3.6cm,
			xlabel={},
			grid=major,
			every axis plot/.append style={mark=*, mark size=1.5pt}
			]

			\pgfplotstableread[col sep = comma]{instance-2-pollution_limit-SA-asymmetric.csv}\instancetwoasymmetric

			\nextgroupplot[
				title=Energy mix agent 0,
				legend style={
					at={(0.5, -0.45)},
					anchor=south,
					font=\footnotesize,
					legend columns=2
				},
				legend cell align={left},
				stack plots = y
				]
			\addplot[
				fill=SkyBlue,
				mark options = {draw=black}
				]
				table [col sep=comma, x=pollution_limit, y=production_node0_tech0]
				{\instancetwoasymmetric}\closedcycle;
			\addlegendentry{tech 0}
			\addplot[
				fill=Maroon,
				mark options = {draw=black},
				mark = square*
				]
				table [col sep=comma, x=pollution_limit, y=production_node0_tech1]
				{\instancetwoasymmetric}\closedcycle;
			\addlegendentry{tech 1}

			\nextgroupplot[
				title=Energy mix agent 1,
				legend style={
					at={(0.5, -0.45)},
					anchor=south,
					font=\footnotesize,
					legend columns=2
				},
				legend cell align={left},
				stack plots = y
				]
			\addplot[
				fill=SkyBlue,
				mark options = {draw=black}
				]
				table [col sep=comma, x=pollution_limit, y=production_node1_tech0]
				{\instancetwoasymmetric}\closedcycle;
			\addlegendentry{tech 0}
			\addplot[
				fill=Maroon,
				mark options = {draw=black},
				mark = square*
				]
				table [col sep=comma, x=pollution_limit, y=production_node1_tech1]
				{\instancetwoasymmetric}\closedcycle;
			\addlegendentry{tech 1}

			\nextgroupplot[
				title=Pollution by node,
				legend style={
					at={(0.5, -0.45)},
					anchor=south,
					font=\footnotesize,
					legend columns=2
				},
				legend cell align={left},
				stack plots = y,
				ymin=0
				]
			\addplot[
				fill=RoyalBlue,
				mark options = {draw=black},
				]
				table [col sep=comma, x=pollution_limit, y=pollution0]
				{\instancetwoasymmetric}\closedcycle;
			\addlegendentry{node 0}
			\addplot[
				fill=RedOrange,
				mark options = {draw=black},
				mark = square*
				]
				table [col sep=comma, x=pollution_limit, y=pollution1]
				{\instancetwoasymmetric}\closedcycle;
			\addlegendentry{node 1}

			\nextgroupplot[
				title=Utility of agent 0,
				legend style={
					at={(0.5, -0.45)},
					anchor=south,
					font=\footnotesize,
					legend columns=2
				},
				legend cell align={left},
				stack plots = y,
				ymin=0
				]
			\addplot[
				fill=RoyalBlue,
				mark options = {draw=black},
				]
				table [col sep=comma, x=pollution_limit, y=agent_utility0]
				{\instancetwoasymmetric}\closedcycle;
			\addlegendentry{net utility}
			\addplot[
				fill=Gray,
				mark options = {draw=black},
				mark = square*
				]
				table [col sep=comma, x=pollution_limit, y=agent_tax0]
				{\instancetwoasymmetric}\closedcycle;
			\addlegendentry{tax}

			\nextgroupplot[
				title=Utility of agent 1,
				legend style={
					at={(0.5, -0.45)},
					anchor=south,
					font=\footnotesize,
					legend columns=2
				},
				legend cell align={left},
				stack plots = y,
				ymin=0
				]
			\addplot[
				fill=RedOrange,
				mark options = {draw=black},
				]
				table [col sep=comma, x=pollution_limit, y=agent_utility1]
				{\instancetwoasymmetric}\closedcycle;
			\addlegendentry{net utility}
			\addplot[
				fill=Gray,
				mark options = {draw=black},
				mark = square*
				]
				table [col sep=comma, x=pollution_limit, y=agent_tax1]
				{\instancetwoasymmetric}\closedcycle;
			\addlegendentry{tax}

			\nextgroupplot[
				title=Operation cost,
				legend style={
					at={(0.5, -0.45)},
					anchor=south,
					font=\footnotesize,
					legend columns=2
				},
				legend cell align={left},
				stack plots = y,
				ymin=0
				]
			\addplot[
				fill=Black,
				mark options = {draw=black},
				]
				table [col sep=comma, x=pollution_limit, y=operation_cost]
				{\instancetwoasymmetric}\closedcycle;
			\addlegendentry{net cost}
			\addplot[
				fill=Gray,
				mark options = {draw=black},
				mark = square*
				]
				table [
				col sep=comma,
				x=pollution_limit,
				y=total_tax
				]
				{\instancetwoasymmetric}\closedcycle;
			\addlegendentry{tax}

			\nextgroupplot[
				title=Flow \(0 \rightarrow 1\),
				only marks,
				ymin=-1,
				ymax=1
				]
			\addplot[color=Black]
				table [
				col sep=comma,
				x=pollution_limit,
				y=flow_0to1
				]
				{\instancetwoasymmetric};

			\nextgroupplot[
				title=Energy prices,
				only marks,
				legend style={
					at={(0.5, -0.45)},
					anchor=south,
					legend columns=2,
					font=\footnotesize
				},
				legend cell align={left},
				ymin=0
				]
			\addplot[mark=x, color=RoyalBlue]
				table [col sep=comma, x=pollution_limit, y=price0]
				{\instancetwoasymmetric};
			\addlegendentry{node 0}
			\addplot[mark=+, color=RedOrange]
				table [col sep=comma, x=pollution_limit, y=price1]
				{\instancetwoasymmetric};
			\addlegendentry{node 1}

			\nextgroupplot[
			title=Pollution tax,
			only marks
			]
			\addplot[color=Black]
				table [
				col sep=comma,
				x=pollution_limit,
				y=tax
				]
				{\instancetwoasymmetric};

		\end{groupplot}
	\end{tikzpicture}
	\caption{Pollution limit sensitivity analysis on asymmetric version of Instance \ref{fig:instance-2}. In all plots, the \(x\) axes represents the pollution limit \(\pollutionlimit\). Missing points correspond to unsuccessful grid search runs.}
	\label{fig:pollution-limit-SA-instance-2-asymmetric}
\end{figure}

The effect of the pollution limit mechanism on the energy mix can be qualitatively described as follows: the technology \(\tech = 1\) of agent \(\node = 1\) is the first to be displaced from the market in favor of its alternative \(\tech = 0\), while the same technology for \(\node = 1\) remains active until \(\production[1][1] = 0\). This is consistent with Proposition \ref{prop:technology-activation-active-pollution}, which implies that for any equilibrium associated to a unique ISO solution (unlike the multiplicity observed in Figure \ref{fig:pollution-limit-SA-instance-2-symmetric}), it is infeasible for more than one node to have two partially dispatched technologies.

Missing points in both plots correspond to values of the pollution limit for which no equilibrium could be determined by the grid search routine. This behavior persisted even when the grid was refined, suggesting that certain pollution limits—especially those that are tight yet not prohibitively so—may cause equilibria to become nonexistent or exceedingly difficult to locate numerically.

\subsubsection{Pollution cost on instance \ref{fig:network-and-table}}

Instance \ref{fig:network-and-table} is the most computationally challenging case studied in this work. Preliminary testing revealed that equilibria associated with active pollution limit values could not be reliably computed. The sensitivity analysis for this instance thus focuses solely on the effects of pollution cost, seen in Figure \ref{fig:pollution-cost-SA-instance-4}. The grid search algorithm struggles to find valid equilibria near a critical value of \(\pollutioncost[1]\) which triggers a switch in the market's overall technology mix. This difficulty is due to a combination between not finding many candidate MCP solutions and a higher proportion of them being discarded by the agent optimality test as not being true Nash equilibria. Solar energy (technology 0) is always dispatched at full capacity, as it \textit{Pareto-dominates} both coal (technology 1) and gas (technology 2), being both cheaper and cleaner. At low pollution cost values, gas is generally unused (except at node \(\node = 2\) where other resources are exhausted) and coal is preferred due to its lower cost.

As the pollution cost on coal increases, the dispatch pattern shifts sharply, mimicking the dynamics seen in Figure \ref{fig:pollution-cost-SA-instance-2}. Coal is replaced by gas, resulting in a significant reduction in network-wide emissions. The flow pattern also reverses: the gas-abundant northern region becomes a net exporter to the center, overtaking the previously dominant coal-heavy southern region.

At the market level, the effect previously seen in Figure \ref{fig:pollution-cost-SA-instance-2} is notably inverted: Agent profitability is reduced as the pollution cost increases to the critical value, and after, instead of them being constant, agent profits increase largely driven by rising nodal prices. This is a consequence of the previously mentioned network's lack of \(N - 1\) robustness, which confers market power to the agents. A \textit{desirable} level of pollution cost is thus suggested to be one that achieves the technology switch without pushing prices prohibitively high. Further insight into this critical regime would require either significantly more computational time or a different numerical approach.

\begin{figure}[htbp]
	\centering
	\begin{tikzpicture}
		\begin{groupplot}[
			group style={
				group size=2 by 4,
				horizontal sep=1.2cm,
				vertical sep=1.7cm,
			},
			width=6.2cm,
			height=3.8cm,
			xlabel={},
			grid=major,
			every axis plot/.append style={
				mark=*,
				mark size=1.5pt
			}
			]

			\pgfplotstableread[col sep = comma]{instance-4-pollution_cost-SA-mod.csv}\instancefourcost

			\nextgroupplot[
				title=Energy mix agent 0,
				legend style={
					at={(0.5, -0.36)},
					anchor=south,
					font=\tiny,
					legend columns=3
				},
				legend cell align={left},
				stack plots = y,
				ymin=0
				]
			\addplot[
				fill=CornflowerBlue,
				mark options = {draw=black}
				]
				table [col sep=comma, x=pollution_cost, y=production_node0_tech0]
				{\instancefourcost}\closedcycle;
			\addlegendentry{tech 0}
			\addplot[
				mark = square*,
				fill = Maroon,
				mark options = {draw=black}
				]
				table [col sep=comma, x=pollution_cost, y=production_node0_tech1]
				{\instancefourcost}\closedcycle;
			\addlegendentry{tech 1}
			\addplot[
				mark = triangle*,
				fill=Dandelion,
				mark options = {draw=black}
				]
				table [col sep=comma, x=pollution_cost, y=production_node0_tech2]
				{\instancefourcost}\closedcycle;
			\addlegendentry{tech 2}

			\nextgroupplot[
				title=Energy mix agent 1,
				legend style={at={(0.5, -0.36)},
					anchor=south,
					font=\tiny,
					legend columns=3
				},
				legend cell align={left},
				stack plots = y,
				ymin=0
				]
			\addplot[
				fill=CornflowerBlue,
				mark options = {draw=black}
				]
				table [col sep=comma, x=pollution_cost, y=production_node1_tech0]
				{\instancefourcost}\closedcycle;
			\addlegendentry{tech 0}
			\addplot[
				mark = square*,
				fill = Maroon,
				mark options = {draw=black}
				]
				table [col sep=comma, x=pollution_cost, y=production_node1_tech1]
				{\instancefourcost}\closedcycle;
			\addlegendentry{tech 1}
			\addplot[
				mark = triangle*,
				fill=Dandelion,
				mark options = {draw=black}
				]
				table [col sep=comma, x=pollution_cost, y=production_node1_tech2]
				{\instancefourcost}\closedcycle;
			\addlegendentry{tech 2}

			\nextgroupplot[
				title=Energy mix agent 2,
				legend style={
					at={(0.5, -0.36)},
					anchor=south,
					font=\tiny,
					legend columns=3
				},
				legend cell align={left},
				stack plots = y,
				ymin=0
				]
			\addplot[
				fill=CornflowerBlue,
				mark options = {draw=black}
				]
				table [col sep=comma, x=pollution_cost, y=production_node2_tech0]
				{\instancefourcost}\closedcycle;
			\addlegendentry{tech 0}
			\addplot[
				mark = square*,
				fill = Maroon,
				mark options = {draw=black}
				]
				table [col sep=comma, x=pollution_cost, y=production_node2_tech1]
				{\instancefourcost}\closedcycle;
			\addlegendentry{tech 1}
			\addplot[
				mark = triangle*,
				fill=Dandelion,
				mark options = {draw=black}
				]
				table [col sep=comma, x=pollution_cost, y=production_node2_tech2]
				{\instancefourcost}\closedcycle;
			\addlegendentry{tech 2}

			\nextgroupplot[
				title=Flows,
				only marks,
				legend style={
					at={(0.5, -0.36)},
					anchor=south,
					font=\tiny,
					legend columns=3
				},
				legend cell align={left}
				]
			\addplot[
				color=Periwinkle,
				mark options = {draw=black}
				]
				table [col sep=comma, x=pollution_cost, y=flow_0to1]
				{\instancefourcost};
			\addlegendentry{$0 \rightarrow 1$}
			\addplot[
				color=Sepia,
				mark options = {draw=black},
				mark=square*
				]
				table [col sep=comma, x=pollution_cost, y=flow_0to2]
				{\instancefourcost};
			\addlegendentry{$0 \rightarrow 2$}
			\addplot[
				color=Lavender,
				mark options = {draw=black},
				mark=triangle*
				]
				table [col sep=comma, x=pollution_cost, y=flow_2to1]
				{\instancefourcost};
			\addlegendentry{$2 \rightarrow 1$}

			\nextgroupplot[
				title=Energy prices,
				only marks,
				legend style={
					at={(0.5, -0.36)},
					anchor=south,
					font=\tiny,
					legend columns=3
				},
				legend cell align={left},
				ymin=0
				]
			\addplot[
				color=RoyalBlue,
				mark options = {draw=black}
				]
				table [col sep=comma, x=pollution_cost, y=price0]
				{\instancefourcost};
			\addlegendentry{node 0}
			\addplot[
				color=BurntOrange,
				mark options = {draw=black},
				mark = square*
				]
				table [col sep=comma, x=pollution_cost, y=price1]
				{\instancefourcost};
			\addlegendentry{node 1}
			\addplot[
				color=ForestGreen,
				mark options = {draw=black},
				mark = triangle*
				]
				table [col sep=comma, x=pollution_cost, y=price2]
				{\instancefourcost};
			\addlegendentry{node 2}

			\nextgroupplot[
				title=Pollution by node,
				legend style={
					at={(0.5, -0.36)},
					anchor=south,
					font=\tiny,
					legend columns=3
				},
				legend cell align={left},
				stack plots = y,
				ymin=0
				]
			\addplot[
				fill=RoyalBlue,
				mark options = {draw=black}
				]
				table [col sep=comma, x=pollution_cost, y=pollution0]
				{\instancefourcost}\closedcycle;
			\addlegendentry{node 0}
			\addplot[
				mark = square*,
				fill = BurntOrange,
				mark options = {draw=black}
				]
				table [col sep=comma, x=pollution_cost, y=pollution1]
				{\instancefourcost}\closedcycle;
			\addlegendentry{node 1}
			\addplot[
				mark = triangle*,
				fill=ForestGreen,
				mark options = {draw=black}
				]
				table [col sep=comma, x=pollution_cost, y=pollution2]
				{\instancefourcost}\closedcycle;
			\addlegendentry{node 2}

			\nextgroupplot[
				title=Agent utilities,
				only marks,
				legend style={
					at={(0.5, -0.36)},
					anchor=south,
					font=\tiny,
					legend columns=3
				},
				legend cell align={left},
				]
			\addplot[
				color=RoyalBlue,
				mark options = {draw=black}
				]
				table [
				col sep=comma,
				x=pollution_cost,
				y=agent_utility0
				]
				{\instancefourcost};
			\addlegendentry{node 0}
			\addplot[
				color=BurntOrange,
				mark options = {draw=black},
				mark = square*
				]
				table [
				col sep=comma,
				x=pollution_cost,
				y=agent_utility1
				]
				{\instancefourcost};
			\addlegendentry{node 1}
			\addplot[
				color=ForestGreen,
				mark options = {draw=black},
				mark = triangle*
				]
				table [
				col sep=comma,
				x=pollution_cost,
				y=agent_utility2
				]
				{\instancefourcost};
			\addlegendentry{node 2}

			\nextgroupplot[
				title=Operation cost,
				only marks,
				ymin=0
				]
			\addplot[color=Black]
				table [
				col sep=comma,
				x=pollution_cost,
				y=operation_cost
				]
				{\instancefourcost};

		\end{groupplot}
	\end{tikzpicture}
	\caption{Pollution cost sensitivity analysis on instance \ref{fig:network-and-table}. In all plots, the \(x\) axis represents the pollution cost \(\pollutioncost[1]\) associated to coal. Missing points represent unsuccessful grid search runs. Linear interpolation in the energy mix and pollution plots was kept to illustrate the emerging transition but real values at equilibrium (if said equilibrium exists) are not presumed to fit the given interpolation.}
	\label{fig:pollution-cost-SA-instance-4}
\end{figure}

To summarize, the effects of the two pollution control mechanisms studied on market equilibria can be described as follows. The pollution cost acts as an internal penalty for agents, leading to abrupt changes in their preferences once the cost exceeds a sufficiently large threshold. Its impact on prices preserves continuity and ISO uniqueness, and no instances of multiple equilibria were observed under this mechanism. In contrast, the pollution limit induces shocks in market prices as soon as it becomes binding. This can be interpreted as conferring market power on generating agents, allowing them to extract from clean technologies the same utility or more that was previously gained from conventional sources. Nevertheless, despite allowing for multiple equilibria as in Figure \ref{fig:pollution-limit-SA-instance-2-asymmetric}, the network energy mix evolves in a gradual and controllable manner under a pollution limit. This feature may make the mechanism attractive from a policy design perspective, although the preceding discussion suggests that it should be implemented with care.

We now discuss computational complexity. Table \ref{tab:numerical-performance-instances} summarizes the performance of the grid searching strategy across experiments. Here, \(K\) denotes the number of points in which each interval \([\truecost, \maximumbid]\) was divided in order to perform the grid search. For Instance \ref{fig:network-and-table}, this value had to be lowered in order to maintain computational tractability. We then report the resulting size of the grid \(K^{\nodenumber \technumber}\) and provide statistics for three performance measures: The \textit{solution rate} refers to the proportion of initial bid configurations that resulted in a solution to the mixed complementarity problem, regardless of whether or not it was later validated as a Nash equilibrium. We also report figures for both the iterations and computation time required to obtain equilibria.

These results show a notable increase in the computational complexity of finding market equilibria as the network grows in size. We see a \textit{curse of dimensionality} effect, especially for the experiment on Instance \ref{fig:network-and-table}, where the solution rate is considerably less than for other experiments. This suggests that finding better methods for solving the underlying MCP could yield performance improvements. We also note that finding equilibria that involve an active pollution limit is significantly harder than ones where the pollution constraint is nonbinding. This can be seen from the experiments on Instance \ref{fig:instance-2}, in which the algorithm's performance on the cost experiment is similar to the one concerning Instance \ref{fig:instance-1-asymmetric}, but the metrics are notably affected once the pollution limit is introduced. Instance \ref{fig:network-and-table} remains the most challenging network to analyze as individual grid search runs may take upwards of \(20\) minutes and finding equilibria is not guaranteed.h

\begin{table}[ht]
	\centering
	\begin{tabular}{|cc|ccccc|}
		\hline
		\multicolumn{2}{|c|}{\backslashbox{Metric}{Experiment}} & \ref{fig:instance-1-asymmetric} (cost) & \ref{fig:instance-2} (cost) & \ref{fig:instance-2} (limit) & \ref{fig:instance-2-asymmetric} (limit) & \ref{fig:network-and-table} (cost) \\
		\hline
		\multicolumn{2}{|c|}{\(K\)} & 3 & 3 & 3 & 3 & 2 \\
		\hline
		\multicolumn{2}{|c|}{\(K^{\nodenumber \technumber}\)} & 9 & 81 & 81 & 81 & 512 \\
		\hline
		\multirow{3}{*}{Solution rate (\%)}
								      & Avg & 100 & 100 & 98.99 & 99.6 & 31.71 \\
								      & Min & 100 & 100 & 66.67 & 87.5 & 19.31 \\
								      & Max & 100 & 100 & 100   & 100  & 50.00    \\
								      \hline
								      \multirow{3}{*}{\makecell{Iterations for \\ first equilibrium}}
								      & Avg & 1.05 & 1.05 & 1.67 & 1.45 & 77.79 \\
								      & Min & 1    & 1    & 1    & 1    & 2     \\
								      & Max & 2    & 2    & 10   & 9    & 334 \\
								      \hline
								      \multirow{3}{*}{\makecell{Time for first \\ equilibrium (s)}}
								      & Avg & 1.47 & 2.05 & 2.20 & 1.72  & 260.57 \\
								      & Min & 1.25 & 0.81 & 0.73 & 0.74  & 6.65 \\
								      & Max & 3.06 & 4.68 & 7.55 & 13.22 & 976.38 \\
								      \hline
	\end{tabular}
	\caption{Summary of computational metrics for all performed sensitivity analysis experiments. For each metric, the average as well as the minimum and maximum values recorded during the experiment are shown. For correctness, figures are computed only for successful searches.}
	\label{tab:numerical-performance-instances}
\end{table}

\section{Conclusions and Future Work} \label{sect:conclusion}

We introduced a game-theoretic model of decentralized electricity markets with pollution control, combining a bilevel ISO-agent structure with transmission losses and emission constraints. We established well-posedness and differentiability of the ISO's response, enabling reformulations based on KKT conditions and the subsequent development of a numerical grid search strategy for equilibrium computation.

Numerical results showed that the studied environmental regulation measures can lead to complex, sometimes discontinuous, equilibrium responses. In particular, pollution penalties exhibited threshold effects, while pollution limits prompted earlier but potentially more abrupt market shifts. In both cases, strategic behavior by firms can lead to windfall profits and cost shifts onto the ISO, raising concerns for policy design in these imperfectly competitive settings.

Several modeling extensions remain open. Key directions include the incorporation of storage technologies, treatment of real-time responsiveness, and a more realistic transmission model that captures voltage and reactive power constraints. On the theoretical side, the complementarity reformulation used in this study could benefit from insights coming from constraint qualification theory as well as solvers that handle large-scale MCPs.

\section{Acknowledgements}

This work was partially supported by Centro de Modelamiento Mate\-mático (CMM) BASAL fund FB210005 for center of excellence from ANID-Chile.

\appendix  

\printbibliography

\end{document}